\newtheorem{theorem}{Theorem}[section]
\newtheorem{lemma}[theorem]{Lemma}
\newtheorem{proposition}[theorem]{Proposition}
\newtheorem{remark}[theorem]{Remark}
\newtheorem{example}[theorem]{Example}
\newtheorem*{corollary*}{Corollary}
\newtheorem*{notation*}{Notation}
\newtheorem*{conjecture*}{Conjecture}
\numberwithin{equation}{section}
\gdef\myletter{}
\let\savetheequation\theequation
\def\theequation{\savetheequation\myletter}
\def\bv{\mathbf{v}}
\newcommand{\CC}{{\mathbb C}}
\newcommand{\RR}{{\mathbb R}}
\newcommand{\PP}{{\mathbb P}}
\renewcommand{\Im}{\mbox{Im}}
\renewcommand{\Re}{\mbox{Re}}
\def \bar{\overline}
\def \hat{\widehat}
\def \bv{{\bf v}}
\def \b0{{\bf 0}}
\begin{document}

\title{Extremal functions for real convex bodies}

\author{D. Burns, N. Levenberg and S. Ma`u}

\keywords{extremal function, foliation, ellipse, convex, Monge-Amp\`ere, Robin indicatrix}

\address{University of Michigan, Ann Arbor, MI 48109-1043 USA}
\email{dburns@umich.edu}

\address{Indiana University, Bloomington, IN 47405 USA}

\email{nlevenbe@indiana.edu}

\address{University of Auckland, Private Bag 92019, Auckland, New Zealand}
\email{s.mau@auckland.ac.nz}
\date{\today}

\begin{abstract}We study the smoothness of the Siciak-Zaharjuta extremal function associated to a convex body in $\mathbb{R}^2$.  We also prove a formula relating the complex equilibrium measure of a convex body in $\mathbb{R}^n$ ($n\geq 2$) to that of its Robin indicatrix.  The main tool we use are extremal ellipses.\end{abstract}

\maketitle

\section{Introduction}

The Siciak-Zaharjuta extremal function for a compact set $K\subset\CC^n$ is the plurisubharmonic (psh) function on $\CC^n$ given by 
$$
V_K(z) := \sup\{u(z): u\in L(\CC^n), u\leq 0 \hbox{ on } K\} 
$$
where $L(\CC^n) = \{u \hbox{ psh on }\CC^n: \exists C\in\RR \hbox{ such that } u(z)\leq\log^+|z|+C \}$  denotes the class of psh functions on $\CC^n$ with logarithmic growth.

The uppersemicontinuous regularization  $V_K^*(z):=\limsup_{\zeta\to z}V_K(\zeta)$ is identically $+\infty$ if $K$ is pluripolar;  otherwise $V_K^*\in L(\CC^n)$; in fact, $V_K^*\in L^+(\CC^n)$ where $$L^+(\CC^n)=\{u\in L(\CC^n):\exists C\in\RR \hbox{ such that }u(z)\geq\log^+|z|+C\}.$$    The set $K$ is \emph{$L$-regular} if $K$ is non-pluripolar and $V_K=V_K^*$; this is equivalent to $V_K$ being continuous.  In this paper, $V_K$ will always have a continuous foliation structure that automatically gives $L$-regularity.

The complex Monge-Amp\`ere operator applied to a function $u$ of class $C^2$ on some domain in $\CC^2$ is given by
$$
(dd^c u)^n = i\partial\bar\partial u\wedge\cdots i\partial\bar\partial  u \quad (n \hbox{ times}).
$$
Its action can be extended to certain non-smooth classes of plurisubharmonic (psh) functions (cf., \cite{bedfordtaylor:dirichlet}). 
  In particular, for a psh function $u$ which is locally bounded, $(dd^cu)^n$ is well-defined as a positive measure.  
  
  If $K$ is compact and { nonpluripolar}, we define the \emph{complex equilibrium measure of $K$}  as $(dd^c V_K^*)^n$.  We also call it the \emph{(complex) Monge-Amp\`ere measure of $K$}. This is a positive measure supported on $K$.
  
For $L$-regular sets, the relationship between the higher order smoothness of $V_K$ and geometric properties of $K$ is not completely understood, except in a few special cases.  It is not known whether $V_K$ is smooth if {  $K$ is the closure of a bounded domain and} the boundary of $K$ is smooth or even real analytic.  It is known that if $K$ is the disjoint union of {  the closures of} finitely many strictly pseudoconvex domains with smooth boundary, then $V_K$ is $C^{1,1}$  \cite{guan:regularity}. 

However, the extremal function has particularly nice properties when $K$ is the closure of a bounded, smoothly bounded, \emph{strictly lineally convex} domain $D\subset\CC^n$.  Then $V_K$ is smooth on $\CC^n\setminus K$, as a consequence of  Lempert's results (\cite{lempert:metrique}, \cite{lempert:intrinsic},
 \cite{lempert:symmetries}).    He showed that there is a smooth foliation of $\CC^n\setminus K$ by holomorphic disks on which $V_K$ is harmonic (\emph{extremal disks}).

If $K$ is a convex body in $\RR^n\subset\CC^n$, it was shown in \cite{burnslevmau:exterior} that as long as $\partial K$ in $\RR^n$ does not contain parallel line segments, there is a continuous foliation of $\CC^n\setminus K$ by extremal disks.  For a \emph{symmetric} convex body, the existence of such a foliation was proved earlier in \cite{baran:plurisubharmonic} by different methods.

The existence of extremal disks through each point of $\CC^n\setminus K$ ($K$ a real convex body) was obtained in \cite{burnslevmau:pluripotential} by an approximation argument using Lempert theory, and it was shown that these disks  must be contained in complexified real ellipses (\emph{extremal ellipses}).
 An important tool used in this study was a real geometric characterization of such ellipses, which was derived from a variational description of the extremal disks.  
The goal of this paper is to establish further properties of $V_K$ by studying its foliation in more detail.   We begin in the next section by recalling basic properties of $V_K$ and its associated extremal ellipses that will be used in what follows. 

In section 3, we study the smoothness of $V_K$.  Results are proved in $\RR^2\subset\CC^2$ as the geometric arguments work only in dimension 2.  For a convex body $K\subset\RR^2$ we first {  show that at certain points of $\CC^2\setminus K$, $V_K$ is pluriharmonic} (and therefore smooth).  At other points, we use the  foliation structure of $V_K$ by extremal ellipses to study its smoothness.  We derive geometric conditions on extremal ellipses that ensure smoothness of the foliation, under the  assumption that the real boundary $\partial K$ (i.e., the boundary of $K$ as a subset of $\RR^2$) is sufficiently smooth.  We also give simple examples to illustrate what happens when these conditions fail.  Two types of ellipses are considered separately:  
\par (1) extremal ellipses intersecting $\partial K$ in exactly two points; and \par (2) extremal ellipses intersecting  $\partial K$ in exactly three points.\\  
This accounts for most ellipses; those that remain are contained in a subset of $\CC^2$ of real codimension 1. 
\vskip6pt
{  \noindent {\bf Theorem \ref{thm:smoothness}.} {\it Let $K\subset\RR^2\subset\CC^2$ be a convex body whose boundary $\partial K$ is $C^r$-smooth ($r\in\{2,3,...\}\cup\{\infty,\omega\}$).  Then $V_K$ is $C^r$ on $\CC^2\setminus K$ except for a set of real dimension at most 3.}}
\vskip6pt

Finally, in section 4, we study the complex equilibrium measure of a convex body $K\subset\RR^n\subset\CC^n$, $n\geq 2$.  If a compact set $K\subset\CC^n$ has the foliation property, we can use a ``transfer of mass'' argument to relate its complex equilibrium measure to $(dd^c\rho_K^+)^n$, where $\rho_K$ denotes the Robin function of $K$ and $\rho_K^+ = \max\{\rho_K,0\}$.  The measure $(dd^c\rho_K^+)^n$ is in fact the equilibrium measure of the \emph{Robin indicatrix of $K$}, $K_{\rho}:=\{\rho_K\leq 0\}$, and the relation is given in terms of the \emph{Robin exponential map}, first defined in \cite{burnslevmau:exterior}. 

\vskip6pt
{  \noindent {\bf Theorem \ref{thm:r2}.} {\it Let $K\subset\RR^n$ be a convex body with unique extremals.  Then for any $\phi$ continuous on $K$, 
$$
\int \phi(dd^cV_K)^n\ =\ \int(\phi\circ F)(dd^c\rho_K^+)^n.
$$} 
\vskip6pt

\noindent  Here $F$ denotes the extension of the Robin exponential map as a continuous function from $\partial K_{\rho}$ onto $K$.  A preliminary step is to prove a version of this (Theorem \ref{thm:r1}) for $K=\bar D$, the closure of a smoothly bounded, strongly lineally convex domain $D$.}

\section{Background}

In this section we recall essential properties of extremal functions and foliations associated to convex bodies.


The following properties of extremal functions are well-known.

\begin{theorem}\label{thm:2.1}
\begin{enumerate}
\item Suppose $K_1\subset\CC^n$ and $K_2\subset\CC^m$ are compact sets.  Then for $(z,w)\in\CC^{n+m}\setminus K_1\times K_2$ we have 
\begin{equation} \label{eqn:2.1a} V_{K_1\times K_2}(z,w) = \max\{V_{K_1}(z),V_{K_2}(w)\}.\end{equation}
\item Let $K\subset\CC^n$ be compact and let $P=(P_1,...,P_n):\CC^n\to\CC^n$ be a polynomial map of degree $d$, $P_j = \hat P_j + r_j$ with $\hat P_j$ homogeneous of degree $d$ and $\deg(r_j)<d$.  Suppose $\hat P^{-1}(0)=\{0\}$  where $\hat P=(\hat P_1,...,\hat P_n)$.  Then for all $z\in \CC^n$, 
\begin{equation}\label{eqn:2.1b}
V_{P^{-1}(K)}(z)=d\cdot V_K(p(z))
\end{equation}
\end{enumerate}
\end{theorem}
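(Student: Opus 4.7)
For statement (i), the inequality $V_{K_1 \times K_2}(z,w) \geq \max\{V_{K_1}(z), V_{K_2}(w)\}$ is immediate from the definition: the function $M(z,w) := \max\{V_{K_1}(z), V_{K_2}(w)\}$ is plurisubharmonic on $\CC^{n+m}$ (max of two psh pullbacks through the coordinate projections), lies in $L(\CC^{n+m})$ by the growth estimate $M(z,w) \leq \log^+|(z,w)| + C$ inherited from $V_{K_i} \in L(\CC^{n_i})$, and vanishes on $K_1 \times K_2$ since $V_{K_i} \leq 0$ on $K_i$; hence $M$ is admissible in the Perron envelope defining $V_{K_1 \times K_2}$. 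For the reverse inequality, I would use the Siciak-Zaharjuta characterization $V_K = \sup_{d \geq 1} \tfrac{1}{d}\log|p_d|$ taken over polynomials $p_d$ of degree $\leq d$ with $\|p_d\|_K \leq 1$. A naive slice-wise Bernstein-Walsh estimate only yields the weaker bound $V_{K_1 \times K_2}(z,w) \leq V_{K_1}(z) + V_{K_2}(w)$; the sharper max bound requires showing that any polynomial $p$ of degree $D$ on $\CC^{n+m}$ with $\|p\|_{K_1 \times K_2} \leq 1$ satisfies $|p(z,w)|^{1/D} \leq \max\{\Phi_{K_1}(z), \Phi_{K_2}(w)\}$, where $\Phi_K := e^{V_K}$. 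This is the classical Siciak product formula argument (cf.\ Klimek, \emph{Pluripotential Theory}, Theorem 5.1.8); the trick is to exploit the fact that the degree of $p(z,\cdot)$ as a polynomial in $w$ drops as the degree in $z$ rises, allowing an iterated Chebyshev-type estimate in one variable to be sharpened by the other.

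For statement (ii), the hypothesis $\hat{P}^{-1}(0) = \{0\}$ makes $P : \CC^n \to \CC^n$ a proper, surjective, finite map of topological degree $d^n$ (by B\'ezout), with the asymptotic $|P(z)| \asymp |z|^d$ at infinity. The direction $V_{P^{-1}(K)}(z) \geq \tfrac{1}{d} V_K(P(z))$ is immediate: given $u \in L(\CC^n)$ with $u \leq 0$ on $K$, the composition $\tfrac{1}{d}(u \circ P)$ is plurisubharmonic, lies in $L(\CC^n)$ via $\tfrac{1}{d}\log^+|P(z)| \leq \log^+|z| + C$, and vanishes on $P^{-1}(K)$; taking the sup over $u$ gives the bound. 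For the reverse, I would construct a pushforward: given $v \in L(\CC^n)$ with $v \leq 0$ on $P^{-1}(K)$, set $u(\zeta) := d \cdot \max\{v(z) : P(z) = \zeta\}$. Off the critical values of $P$, local holomorphic branches of $P^{-1}$ exhibit $u$ as a pointwise maximum of finitely many psh functions, so $u$ is psh there, and its upper-semicontinuous regularization $u^*$ extends plurisubharmonically across the critical locus (which is pluripolar). The growth estimate $u^*(\zeta) \leq \log^+|\zeta| + C$ is immediate from $|z|^d \asymp |\zeta|$ at infinity, and $u^* \leq 0$ on $K$ because all preimages of $\zeta \in K$ lie in $P^{-1}(K)$. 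Hence $u^* \leq V_K$, which translates to $v(z) \leq \tfrac{1}{d} V_K(P(z))$; taking the sup over $v$ completes the argument.

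The main obstacle lies in (i): the gap between the easy slice-Bernstein-Walsh sum bound and the sharp max bound requires the delicate polynomial approximation argument sketched above, which is the essential content of the classical Siciak product formula. In (ii) the only subtlety is the plurisubharmonic extension of the pushforward across the critical locus of $P$, which is routinely handled by upper-semicontinuous regularization combined with pluripolar removability.
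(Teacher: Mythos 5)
The paper's ``proof'' of this theorem is a bare citation to Chapter 5 of Klimek's book, and your sketch reproduces precisely the standard arguments found there (the product property, Theorem 5.1.8, and the polynomial-mapping formula, Theorem 5.3.1), correctly isolating the one genuinely hard step --- upgrading the naive slice-wise sum bound to the sharp max bound in (1) --- and deferring it to that same reference; so this is essentially the same approach as the paper. One small discrepancy worth noting: the paper's displayed formula in part (2) has the factor $d$ on the wrong side, and what your argument (correctly) establishes is $V_{P^{-1}(K)}(z)=\frac{1}{d}V_K(P(z))$, equivalently $V_K(P(z))=d\cdot V_{P^{-1}(K)}(z)$, as the check $P(z)=z^d$, $K=\bar\Delta$ in one variable confirms.
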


\begin{proof}
See e.g. Chapter 5 of \cite{klimek:pluripotential}.
\end{proof}
If $L$ is an affine change of coordinates then (\ref{eqn:2.1b}) shows that  $V_{L(K)}(L(z)) = V_K(z)$.

\bigskip

Next, let $K$ be a convex body.  We summarize the essential properties of extremal curves for $V_K$.

\begin{theorem}
 \begin{enumerate}
  \item Through every point $z\in\CC^n\setminus K$ there is either a complex ellipse $E$ with $z\in E$ such that $V_K$ restricted to $E$ is harmonic 
on $E\setminus K$, or there is a complexified real line $L$ with $z\in L$ such that $V_K$ is harmonic on $L\setminus K$.
\item For $E$ as above, $E\cap K$ as above is a real ellipse  inscribed in $K$, i.e., for its given eccentricity and orientation, it is the ellipse with largest area  
completely contained in $K$; if $L$ is as above, $L\cap K$ is the longest line segment (for its given direction) completely contained in $K$.
\item Conversely, suppose $C_T\subset K$ is a real inscribed ellipse (or line segment) with maximal area (or length) as above.  Form $E$ (or $L$) by complexification (i.e., find the unique complex algebraic curve of degree $\leq 2$ containing $C_T$.)  Then $V_K$ is harmonic on $E\setminus C_T$ (resp. $L\setminus C_T$). 
 \end{enumerate}
\end{theorem}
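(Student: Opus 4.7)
For part (1), I would follow the strategy of \cite{burnslevmau:pluripotential}: approximate $K$ from outside by a decreasing family of smoothly bounded, strongly lineally convex domains $D_\epsilon$ that are invariant under complex conjugation (feasible since $K\subset\RR^n$). By Lempert's theorem, each $\CC^n\setminus\overline{D_\epsilon}$ admits a smooth Monge-Amp\`ere foliation by extremal discs $\phi_\epsilon:\CC\setminus\overline{\Delta}\to\CC^n$ satisfying $V_{\overline{D_\epsilon}}\circ\phi_\epsilon(\zeta)=\log|\zeta|$. Fixing $z\in\CC^n\setminus K$ and selecting the disc through $z$ for each $\epsilon$, a normal-families argument produces a subsequential limit $\phi_z$ as $\epsilon\downarrow 0$ with $V_K\circ\phi_z(\zeta)=\log|\zeta|$. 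The $\RR^n$-conjugation symmetry of the $D_\epsilon$ implies that $\zeta\mapsto\overline{\phi_\epsilon(1/\bar\zeta)}$ is also a Lempert disc, and matching these on the unit circle forces the image of $\phi_z$ to lie in an algebraic curve of degree at most two, i.e.\ in the complexification of either a real line or a real ellipse.

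For part (2), I would use the Joukowski-type parametrization $\phi(\zeta)=a+b\zeta+\bar b/\zeta$ of $E$, with $a\in\RR^n$ and $b\in\CC^n$, so that $\phi(\partial\Delta)$ traces the real ellipse $C_T:=E\cap\RR^n$. The identity $V_K\circ\phi(\zeta)=\log|\zeta|$ vanishes exactly on $\partial\Delta$; together with $V_K=0$ on $K$ and $V_K>0$ off $K$ (from $L$-regularity), this yields $E\cap K=C_T$. To establish maximality, suppose some real ellipse $C_T'\subset K$ with the same eccentricity and orientation as $C_T$ has strictly larger area. Pulling $\log|\zeta'|$ back via the Joukowski parametrization of the complexification $E'$ and extending appropriately produces an admissible candidate in the sup defining $V_K$ that strictly dominates $\log|\zeta|$ at points near $C_T$, contradicting the equality $V_K\circ\phi(\zeta)=\log|\zeta|$ along $E$. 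The line-segment case is identical with the degenerate parametrization $\phi(\zeta)=a+b\zeta$.

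Part (3) then follows by combining (1) and (2): given a maximal inscribed real ellipse $C_T$ with complexification $E$, pick $w\in E\setminus C_T$ and apply part (1) to get an extremal ellipse $E_w$ through $w$; part (2) identifies $E_w$ with the complexification of another maximal inscribed ellipse through the relevant real points, which must equal $E$ by maximality. Hence $V_K$ is harmonic on $E\setminus C_T$. The main obstacle is the Lempert-disc limit in part (1): controlling both the nondegeneracy of $\phi_z$ and ensuring its image remains a degree-$\leq 2$ algebraic curve as the approximating domain $D_\epsilon$ loses strict lineal convexity at $\partial K$. This rigidity relies essentially on the $\RR^n$-symmetry preserved by the approximation and on the explicit algebraic structure of Lempert discs in the symmetric setting.
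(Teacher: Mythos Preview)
The paper does not actually prove this theorem; it simply cites Theorem 5.2 and Section 6 of \cite{burnslevmau:exterior}. So there is no argument in the present paper to compare against directly. That said, your plan for parts (1) and (2) is exactly the strategy the paper summarizes in its background section: approximate $K$ by conjugation-invariant, strongly lineally convex domains, pass Lempert extremal disks to a limit, and use the $\RR^n$-symmetry to force the limit curve to be algebraic of degree $\le 2$; then read off maximality of the real trace from the Joukowski parametrization. This is the correct outline.

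Your argument for part (3), however, has a genuine gap. You pick $w\in E\setminus C_T$, invoke part (1) to produce an extremal ellipse $E_w$ through $w$, and then assert that $E_w$ ``must equal $E$ by maximality.'' But part (2) only tells you that $E_w\cap K$ is maximal for \emph{its own} eccentricity and orientation; nothing forces these to agree with the eccentricity and orientation of $C_T$. Through a generic point $w\in\CC^n\setminus K$ there may be many extremal ellipses with different shape parameters, and part (1) hands you one of them with no control over which. So the identification $E_w=E$ is unjustified as written. A workable route is to run the approximation argument of part (1) while \emph{fixing the asymptotic direction} $[0:b]\in H_\infty$ determined by $C_T$: for each approximating domain there is a Lempert disk with that prescribed direction at infinity, and the limit is an extremal ellipse for $K$ with the same parameter $[0:b]$, hence (by part (2)) a maximal inscribed ellipse with the same eccentricity and orientation as $C_T$. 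When such maximal ellipses are unique this immediately gives $E$; in the non-unique case one argues that any translate of a maximal $C_T$ yields the same value of $V_K$ along its complexification (compare the proof of Theorem \ref{thm:3.2}). Either way, the key is to control the shape parameter in the limiting process rather than the basepoint $w$.
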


\begin{proof}
See Theorem 5.2 and Section 6 of \cite{burnslevmau:exterior}.
\end{proof}

The ellipses and lines discussed above have parametrizations of the form
\begin{equation}\label{eqn:2.1}
F(\zeta) = a + c\zeta + \frac{\bar c}{\zeta},
\end{equation}
$a\in\RR^n$, $c\in\CC^n$, {  $\zeta \in \CC$ with $V_K(F(\zeta))=\bigl|\log|\zeta|\bigr|$. (As usual, $\bar c$ denotes the component-wise complex conjugate of $c$.)}  These are higher dimensional analogs of the classical Joukowski function $\zeta\mapsto\frac{1}{2}(\zeta + \frac{1}{\zeta})$.

A curve parametrized as in (\ref{eqn:2.1}) is the image of the circle $\{(z_1,z_2)\in\CC^2:z_1^2+z_2^2=1\}$ under the affine map $$\CC^2\ni(z_1,z_2)\mapsto a+z_12\Re(c)-z_22\Im(c)\in\CC^n.$$
Hence $F$ parametrizes an ellipse in $\CC^n$ if $\Im(c)\neq 0$, otherwise (for $c\in\RR^2$) it gives the complex line $\{a+\lambda c: \lambda\in\CC\}$.   For convenience, we usually consider both cases together by regarding the complex lines to be degenerate ellipses with infinite eccentricity.  These algebraic curves will be referred to as  \emph{extremal curves} or \emph{extremal ellipses}, including the degenerate case.  

From part 3 of the above theorem, one can see that an extremal curve for $V_K$ may not be unique for a given eccentricity and orientation if $K$ contains parallel line segments in its boundary $\partial K$ (as a boundary in $\RR^n$), as it may be possible to translate the curve and obtain another extremal.  On the other hand, if no such line segments exist (e.g. if $K$ is strictly convex) then extremal curves are unique.

The following  was shown in \cite{burnslevmau:exterior}.
\begin{theorem} \label{thm:2.2}
If $K\subset\RR^n$ is a convex body such that all its extremal curves are unique, then these curves give a continuous foliation of $\CC\PP^n\setminus K$ by analytic disks such that the restriction of $V_K$ to any leaf of the foliation is harmonic on $\CC^n$. \qed
\end{theorem}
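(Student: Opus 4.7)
The plan is to use the uniqueness hypothesis to identify the leaves pointwise, and then derive continuity from a compactness-plus-uniqueness argument. By part (1) of the previous theorem, through every $z\in\CC^n\setminus K$ passes an extremal curve, parametrized as in (\ref{eqn:2.1}) by some $F_z(\zeta) = a_z + c_z\zeta + \bar c_z/\zeta$ on $\{|\zeta|\geq 1\}$. Under the uniqueness hypothesis the image $E_z := F_z(\{|\zeta|\geq 1\})$ depends only on $z$, and two different leaves cannot meet outside $K$: if $w\in E_z\cap E_{z'}$ with $w\notin K$, then both $E_z$ and $E_{z'}$ are extremal curves through $w$, so by uniqueness at $w$ they coincide. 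Hence the $E_z$ give a set-theoretic partition of $\CC^n\setminus K$ by analytic disks, and since each $F_z$ extends to $\zeta=\infty$ with $F_z(\infty)\in\CC\PP^n$, the partition extends to one of $\CC\PP^n\setminus K$.

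For continuity, suppose $z_n\to z$ in $\CC^n\setminus K$. Choose parametrizations $F_n$ of $E_{z_n}$ normalized so that $F_n(\zeta_n)=z_n$ for a prescribed $\zeta_n$ of modulus $e^{V_K(z_n)}$, fixing the rotational ambiguity $\zeta\mapsto e^{i\theta}\zeta$. The inscribed real ellipse $F_n(\{|\zeta|=1\})\subset K$ together with the bounded value $F_n(\zeta_n)=z_n$ forces the coefficient vectors $(a_n,c_n)$ to remain bounded. After extracting a subsequence, $(a_n,c_n)\to(a_\infty,c_\infty)$, giving a limit map $F_\infty$ whose image $E_\infty$ passes through $z$. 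Part (2) of the previous theorem characterizes $E_{z_n}\cap K$ as the real ellipse inscribed in $K$ of maximal area for its eccentricity and orientation; these real-geometric properties pass to the limit, so by the converse in part (3), $E_\infty$ is itself an extremal curve through $z$. Uniqueness forces $E_\infty=E_z$, and since every convergent subsequence has the same limit, the full sequence $F_n\to F_z$, yielding continuity of the foliation.

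Harmonicity of $V_K$ on each leaf is built into the parametrization, since $V_K(F_z(\zeta))=\log|\zeta|$ is harmonic on $\{|\zeta|>1\}$. The most delicate step will be the normalization and compactness in the second paragraph: one must pick consistent rotational normalizations so that $\zeta_n$ converges, and rule out degeneration of $c_n$ (corresponding to ellipses pinching to line segments or escaping to the boundary of the moduli space of extremals). The uniqueness hypothesis is essential here — without it, a sequence of distinct extremals through nearby points could accumulate on several extremals through $z$, and the leaves would branch rather than giving an honest foliation.
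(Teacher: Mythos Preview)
Your argument conflates two different notions of uniqueness. The hypothesis ``all extremal curves are unique'' means: for each eccentricity and orientation (equivalently, for each parameter $c\in H_{\infty}$) there is a \emph{unique} inscribed extremal ellipse with that data. It does \emph{not} say that through each point $z\in\CC^n\setminus K$ there passes only one extremal curve. So your first paragraph's key step --- ``both $E_z$ and $E_{z'}$ are extremal curves through $w$, so by uniqueness at $w$ they coincide'' --- is exactly what has to be \emph{proved}, not assumed. Two extremal ellipses with different parameters $c\neq c'$ could a priori cross outside $K$, and nothing in the hypothesis rules that out. As the paper indicates just after the statement, establishing this disjointness is in fact the bulk of the work in \cite{burnslevmau:exterior}, carried out by a case-by-case analysis of the real geometry of convex bodies; your proposal skips this entirely.

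The same confusion undermines your continuity argument: the line ``Uniqueness forces $E_\infty=E_z$'' again invokes uniqueness \emph{through a point}, which you have not established. What the hypothesis would give you is that the map $c\mapsto$ (extremal with parameter $c$) is single-valued, and a compactness argument of the sort you sketch could then show this map is continuous in $c$. That is a reasonable alternative to the paper's route --- which obtains continuity as a by-product of approximating $K$ by smooth strongly convex conjugation-invariant bodies and passing Lempert's smooth foliations to the limit --- but it still would not yield a foliation of $\CC^n\setminus K$ until the disjointness of distinct leaves is proved independently.
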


In the above result we {  are considering} $\CC\PP^n=\CC^n\cup H_{\infty}$ via the usual identification of homogeneous coordinates $[Z_0:Z_1:\cdots:Z_n]$ with the affine coordinates $(z_1,...,z_n)$ given by $z_i=Z_i/Z_0$ when $Z_0\neq 0$, and $H_{\infty}=\{Z_0=0\}$. An analytic disk which is a leaf of the foliation is precisely `half' of an extremal ellipse. 
Letting $\bar\Delta=\{|\zeta|\leq 1\}$ denote the closed unit disk in $\CC$ and $\hat\CC$ the Riemann sphere, a leaf of the foliation may be given by $F(\hat\CC\setminus\bar\Delta)$, with $F$ as in (\ref{eqn:2.1}) extended holomorphically to infinity via $F(\infty)=[0:c_1:\cdots:c_n]=:[0:c]$.

Analytic disks through conjugate points $[0:c]$ and $[0:\bar c]$ at $H_{\infty}$ (called \emph{conjugate leaves} in \cite{burnslevmau:pluripotential}) are the two `halves' of an extremal ellipse, and fit together along the corresponding real inscribed ellipse in $K$. 

The bulk of the proof of Theorem \ref{thm:2.2} consisted in verifying that two extremal ellipses can only intersect in the set $K$, hence they are disjoint in $\CC^n\setminus K$.  This was done on a case by case basis using the geometry of real convex bodies. 

That these ellipses foliate $\CC\PP^n\setminus K$ continuously was obtained as a by-product of the approximation techniques used to prove their existence.   This was to approximate $K$ by a decreasing sequence $\{K_i\}\searrow K$ of strongly convex, conjugation invariant bodies in $\CC^n$ with real-analytic boundary.  For such sets $K_j$, Lempert theory gives the existence of a smooth foliation of $\CC^n\setminus K_j$ by analytic disks such that the restriction of $V_{K_j}$ to each disk is harmonic.  It was also verified in \cite{burnslevmau:exterior} that these foliations extend smoothly across $H_{\infty}$ in local coordinates.  In the limit {  as $j\to\infty$, they converge to a continuous foliation parametrized by $H_{\infty}$.}

We remark that $H_{\infty}$ is a natural parameter space for leaves of the foliation by recalling its real geometric interpretation.  Two ellipses 
$$
\zeta\mapsto a+b\zeta+\bar b/\zeta,\quad \zeta\mapsto  a' +b'\zeta +\bar{b'}/\zeta 
$$
intersect the same point $c=[0:b]=[0:b']\in H_{\infty}$ if and only if $b=\lambda b'$ for some {  $\lambda\in\CC^*=\CC\setminus \{0\}$. }  Writing $\lambda=re^{i\psi}$ ($r>0$) and putting $\zeta=e^{i\theta}$ the parametrizations become
\begin{align*}
e^{i\theta}&\mapsto a+2(\Re(b)\cos\theta-\Im(b)\sin\theta)\ \hbox{ and} \\ e^{i\theta}&\mapsto a'+ 2r(\Re(b)\cos(\theta+\psi)-\Im(b)\sin(\theta+\psi)).
\end{align*}
As $\theta$ runs through $\RR$ these parametrizations trace real ellipses in $\RR^n$ related by the translation $a-a'$ and the scale factor $r$, but with  the same eccentricity and orientation.  If $a=a'$ and $|\lambda|=1$ we get a reparametrization of the same ellipse.

Given a parameter $c\in H_{\infty}$, write $a=a(c)$ and $b=b(c)$ where $\zeta\mapsto a+b\zeta+\bar b/\zeta$ ($b=(b_1,...,b_n)$) is an extremal ellipse for the eccentricity and orientation given by $c\in H_{\infty}$.  When $b_1\neq 0$, we may reparametrize the ellipse so that $b_1\in(0,\infty)$.  Put $c_j=\frac{b_j}{b_1}$ and $\rho(c)=b_1$. We then write an extremal as 
\begin{equation}\label{eqn:1.}
\zeta\mapsto a(c) +\rho(c)\Bigl((1,c_2,...,c_n)\zeta+(1,\bar c_1,...,\bar c_n)/\zeta\Bigr) \ = \ a(c) + \rho(c)\Bigl(c\zeta + \frac{\bar c}{\zeta}\Bigr)
\end{equation}
(slightly abusing notation in the last expression).  When extremals are unique, $a(c)$ and $\rho(c)$ are uniquely determined by $c$, so by Theorem \ref{thm:2.2} are continuous functions.  (Note that this is only valid locally, i.e.,  when $b_1\neq 0$.)


\section{Smoothness of $V_K$ in $\CC^2$} \label{sec:3}

We specialize now to a compact convex body $K\subset\RR^2\subset\CC^2$, and $\partial K$ will then denote the boundary in $\RR^2$. Denote coordinates in $\CC^2$ by $z=(z_1,z_2)$, and use $x=(x_1,x_2)$ when restricting to $\RR^2$.    In analyzing the extremal ellipses associated to $K$, we will employ elementary geometric arguments which do not directly generalize to higher dimensions.

\subsection{Points at which $V_K$ is pluriharmonic}

From classical potential theory in one complex variable we have the well-known formula $$V_{[-1,1]}(\zeta)=\log|h(\zeta)| \quad  (\zeta\not\in[-1,1]),$$   where $h(\zeta)=\zeta+\sqrt{\zeta^2-1}$ is the inverse of the Joukowski function (c.f., \cite{ransford:potential}).  Hence if  
$S=[-1,1]\times[-1,1]\subset\RR^2\subset\CC^2$ is the square centered at the origin, then by   (\ref{eqn:2.1a}), 
\begin{equation}\label{eqn:3.1}
V_S(z)=\max\{\log|h(z_1)|,\log|h(z_2)|\}.
\end{equation}
On $\CC^2\setminus S$ this is the maximum of two pluriharmonic functions.  A continuous foliation for $V_S$ is given by extremal ellipses for $S$ centered at the origin \cite{baran:plurisubharmonic}.


\begin{lemma} \label{lem:3.1}
Suppose $C$ is an extremal curve for the square $S$, and $z=(z_1,z_2)\in C$.  Let $i=1$ or $2$.  Then $C$ intersects both of the lines $z_i=\pm 1$ (where $i=1$ or $2$) if and only if $V_S(z)=\log|h(z_i)|$.
\end{lemma}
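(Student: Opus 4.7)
The plan is to reduce both sides of the equivalence to the same algebraic condition on the Joukowski coefficients of $C$, then tackle the nontrivial direction via a minimum-principle argument on the Riemann sphere.

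By (\ref{eqn:2.1}) parametrize $C$ as $F(\zeta)=a+c\zeta+\bar c/\zeta$ with $a=(a_1,a_2)\in\RR^2$, $c=(c_1,c_2)\in\CC^2$, so $V_S(F(\zeta))=\bigl|\log|\zeta|\bigr|$. Work on the leaf $\{|\zeta|>1\}$, where $V_S\circ F=\log|\zeta|$. Since the real inscribed ellipse $F(\{|\zeta|=1\})$ lies in $S=[-1,1]^2$, the $i$-th coordinate range $[a_i-2|c_i|,\,a_i+2|c_i|]$ sits in $[-1,1]$.

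\emph{Step 1 (geometry).} The real inscribed ellipse meets both lines $\{x_i=\pm 1\}$ exactly when $a_i+2|c_i|=1$ and $a_i-2|c_i|=-1$, i.e., when $a_i=0$ and $|c_i|=\tfrac12$. \emph{Step 2 (sufficiency).} Writing $c_i=\tfrac12 e^{i\alpha}$, one gets $z_i(\zeta)=\tfrac12(e^{i\alpha}\zeta+e^{-i\alpha}/\zeta)=J(e^{i\alpha}\zeta)$ for the Joukowski $J(w)=(w+1/w)/2$. Since $h\circ J=\mathrm{id}$ on $\{|w|>1\}$, $h(z_i(\zeta))=e^{i\alpha}\zeta$, so $\log|h(z_i(\zeta))|=\log|\zeta|=V_S(F(\zeta))$ identically on the leaf.

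\emph{Step 3 (necessity, main step).} Suppose $V_S(z)=\log|h(z_i)|$ at some $z=F(\zeta_0)$ with $|\zeta_0|>1$. Set
\[
u(\zeta):=\log|\zeta|-\log|h(z_i(\zeta))|,\qquad|\zeta|>1.
\]
By (\ref{eqn:3.1}), $V_S\geq\log|h(z_i)|$, so $u\geq 0$ with $u(\zeta_0)=0$. The composition $\log|h(z_i(\zeta))|=V_{[-1,1]}(z_i(\zeta))$ is plurisubharmonic in $\zeta$, so $u$ is superharmonic. On $|\zeta|=1$, the values $z_i(\zeta)$ lie in $[-1,1]$, giving $|h|=1$ and $u=0$. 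At $\zeta=\infty$, the Laurent expansion $h(z_i(\zeta))=2c_i\zeta+2a_i+O(1/\zeta)$ yields $u(\infty)=-\log(2|c_i|)\geq 0$. On the domain $\{|\zeta|>1\}\cup\{\infty\}\subset\widehat{\CC}$, the minimum principle applied to the non-negative superharmonic function $u$ with interior zero at $\zeta_0$ forces $u\equiv 0$; evaluating at $\infty$ gives $|c_i|=\tfrac12$.

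\emph{Step 4.} Because $u\equiv 0$, $z_i(\zeta)\notin[-1,1]$ for $|\zeta|>1$, so $g(\zeta):=h(z_i(\zeta))/\zeta$ is holomorphic with $|g|\equiv 1$, hence a unimodular constant $e^{i\beta}$. Inverting, $z_i(\zeta)=J(e^{i\beta}\zeta)=\tfrac12(e^{i\beta}\zeta+e^{-i\beta}/\zeta)$; matching Laurent coefficients with $a_i+c_i\zeta+\bar c_i/\zeta$ forces $a_i=0$ and $c_i=e^{i\beta}/2$. By Step 1, $C$ meets both lines $z_i=\pm 1$.

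The main obstacle is Step~3: making the minimum-principle argument rigorous on the unbounded leaf — verifying that $u$ extends as a non-negative superharmonic function across $\zeta=\infty$ with the claimed boundary value, and that subharmonicity of $\log|h\circ z_i|$ persists across $z_i^{-1}([-1,1])$. Once that is in place, Step~4 is the standard fact that a holomorphic function of constant modulus is constant.
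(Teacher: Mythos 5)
Your proof is correct, and while the sufficiency direction (Steps 1--2) is essentially the paper's argument --- both reduce ``$C_T$ touches both lines $x_i=\pm1$'' to $a_i=0$, $|c_i|=\tfrac12$ and then observe that $z_i(\zeta)$ is exactly an inverse Joukowski map, so $\log|h(z_i(\zeta))|=\log|\zeta|=V_S(F(\zeta))$ --- the necessity direction takes a genuinely different route. The paper argues contrapositively and geometrically: if $C_T$ misses one of the lines it is tangent to $x_i=1-\epsilon$, a short inequality argument forces $\rho<\tfrac12$, and then a direct computation is claimed to give $\log|h(z_i)|<\log|t|=V_S$. You instead assume equality at a single point $\zeta_0$ with $|\zeta_0|>1$ and propagate it: $u(\zeta)=\log|\zeta|-V_{[-1,1]}(z_i(\zeta))$ is a nonnegative superharmonic function on the leaf vanishing at the interior point $\zeta_0$, hence identically zero by the minimum principle, and evaluation at $\infty$ plus the Laurent matching of Step 4 recovers $|c_i|=\tfrac12$, $a_i=0$. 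This rigidity argument buys something concrete: the paper's closing identity $\log|h(\rho(t+\tfrac1t))|=\log|2\rho t|$ is exact only when $a_1=0$ and $\rho=\tfrac12$, so as written that step needs a small repair (e.g., comparing $V_{[-1,1]}$ with the extremal function of the strictly smaller segment $[a_1-2\rho,a_1+2\rho]$), whereas your approach bypasses the computation entirely. Two small remarks: since the minimum of $u$ is attained at the \emph{interior} point $\zeta_0$, you do not actually need the boundary values on $|\zeta|=1$ or the superharmonic extension across $\infty$ --- the minimum principle on the connected domain $\{|\zeta|>1\}$ already yields $u\equiv0$, after which $|c_i|=\tfrac12$ follows by letting $\zeta\to\infty$; and the biconditional should be read for $z\in C\setminus S$ (as both you and the paper implicitly do), since at real points of $C_T$ both sides vanish regardless of whether $C$ meets the lines.
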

	
\begin{proof} 
Take $i=1$; the proof when $i=2$ is identical.  We have a parametrization $z=(a_1,a_2)+\rho((1,c_2)t+(1,\bar c_2)/t)\in C$. Note that since $C$ is extremal and intersects the lines $z_1=\pm 1$ tangentially, by symmetry the midpoint of the ellipse lies on the line $z_1=0$; so $a_1=0$. 

We verify that $\rho=\frac{1}{2}$.  
Since $C$ intersects the lines $z_1=\pm 1$ there exist $\phi_1,\phi_2\in[-\pi,\pi]$ such that
\begin{align*}
1 \ & = \  \rho(e^{i\phi_1}+e^{-i\phi_1}) = 2\rho\cos\phi_1, \\
-1 \ & = \ \rho(e^{i\phi_2}+e^{-i\phi_2}) = 2\rho\cos\phi_2
\end{align*}
 Either equation immediately implies that $\rho\geq \frac{1}{2}$, and the reverse inequality follows since the real points of $C$ lie in $S$.  Hence $z_1$ is given by the Joukowski function:   $z_1=\frac{1}{2}(t+\frac{1}{t})=h^{-1}(t)$ for $t\in\CC$ and
$$
V_S(z) = \log|t| = \log|h\circ h^{-1}(t)| = \log|h(z_1)|.
$$

Conversely, suppose $C$ does not intersect, say, $z_1=1$.  We want to show that $V_S(z)>\log|h(z_1)|$.  Now the real points of $C$, which are contained in $S$, tangentially intersect the line  $z_1=1-\epsilon$ for some $\epsilon>0$.   At a point $(z_1,z_2)\in C$, the parametrization of $C$ yields $z_1=a_1 + \rho(t+\frac{1}{t})$.  The $z_1$-coordinates of real points of $C$, given by $t=e^{i\theta}$ ($\theta\in \RR$)  must satisfy
$$
a_1 + 2\rho\cos\theta\in[-1,1-\epsilon], \quad \forall \theta.
$$
We show that $\rho\geq\frac{1}{2}$ cannot hold.  Suppose it does;  then we must have  $\cos\theta\in[-1-a_1,1-\epsilon-a_1]=:I$.  If $a_1<0$, then $\cos\pi=-1\not\in I$, a contradiction. But on the other hand, if $a_1\geq 0$ then $\cos 0=1\not\in I$, which is also a contradiction.  Hence $\rho <\frac{1}{2}$.  Finally, a calculation yields  
$$\log|h(z_1)| = \log|h(\rho(t+\frac{1}{t}))| = \log|2\rho t|<\log|t|=V_S(z_1).$$  
\end{proof}

The above lemma shows that if $z\in\CC^2\setminus S$ lies on an extremal ellipse for $S$ that does not intersect all four sides, then $V_S$ is pluriharmonic in a neighborhood of $z$.

\begin{theorem}\label{thm:3.2}
Suppose $K\subset\RR^2$ contains a pair of parallel line segments.  Suppose $C$ is an extremal curve of $K$ that intersects $\partial K$ in the interior of these two line segments and in no other points.  Then for any $z\in C\setminus K$, $V_K$ is pluriharmonic in a neighbourhood of $z$.
\end{theorem}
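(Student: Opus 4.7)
My strategy is to show that in a neighborhood of $z$ the function $V_K$ coincides with the pullback $\log|h(w_2)|$ of the one-variable extremal function of $[-1,1]$, which is pluriharmonic off the strip $\{w:w_2\in[-1,1]\}$. By the affine invariance in Theorem \ref{thm:2.1}(2), I may assume after an affine change of coordinates that the two parallel segments of $\partial K$ lie on the lines $\{x_2=\pm 1\}$, so $K\subset\RR\times[-1,1]$. Since $C$ is tangent to these lines at interior points of the segments, a computation as in the proof of Lemma \ref{lem:3.1} (maximizing $a_2+2\Re(c_2 e^{i\theta})$) forces $a_2=0$ and $|c_2|=\tfrac12$, so after rotating the $\zeta$-parameter in (\ref{eqn:2.1}) I can put $C$ into the normal form
\[
F(\zeta) \ =\ (a_1^*,0) + (c_1^*, \tfrac12)\zeta + (\overline{c_1^*},\tfrac12)/\zeta,
\]
whose second coordinate is the Joukowski function $F_2(\zeta)=\tfrac12(\zeta+1/\zeta)$. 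Writing $z=F(\zeta_0)$ with $|\zeta_0|>1$ gives $\zeta_0=h(z_2)$, so that $V_K(w)=\log|\zeta|=\log|h(w_2)|$ for every $w\in C\setminus K$.

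The inequality $V_K(w)\geq\log|h(w_2)|$ holds globally: the function $u(w):=V_{[-1,1]}(w_2)$ lies in $L(\CC^2)$ and is $\leq 0$ on the strip $\{|x_2|\leq 1\}\supset K$. My plan for the reverse inequality near $z$ rests on the following observation. Consider the family of complex ellipses of ``shape $C$'', namely $F_{(a_1,c_1)}(\zeta)=(a_1,0)+(c_1,\tfrac12)\zeta+(\bar c_1,\tfrac12)/\zeta$, parametrized by $(a_1,c_1)\in\RR\times\CC$. Any such ellipse whose real trace lies in $K$ is automatically an extremal ellipse of $K$: being tangent to $\{x_2=\pm 1\}$, it saturates the $x_2$-extent allowed by $K\subset\RR\times[-1,1]$, so no ellipse of the same eccentricity and orientation can be larger. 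On any such extremal the identical Joukowski computation yields $V_K(w)=\log|h(w_2)|$.

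It remains to show that these shape-$C$ extremal ellipses cover a full neighborhood of $z$. The hypothesis that $C\cap\partial K$ lies in the interiors of the parallel segments makes the condition ``real trace contained in $K$'' open in the parameter space: ellipses with $(a_1,c_1)$ close to $(a_1^*,c_1^*)$ remain extremal, since their tangent points to $\{x_2=\pm 1\}$ stay in the interiors of the segments and the rest of the real trace stays in the interior of $K$. Consider then
\[
\Phi\colon\RR\times\CC\times\CC^*\to\CC^2,\qquad \Phi(a_1,c_1,\zeta) \ =\ \bigl(a_1+c_1\zeta+\bar c_1/\zeta,\ \tfrac12(\zeta+1/\zeta)\bigr).
\]
A direct calculation shows that $\partial_{a_1}\Phi$, $\partial_{\Re c_1}\Phi$, $\partial_{\Im c_1}\Phi$ all lie in $\CC\times\{0\}$ and the latter two are $\RR$-linearly independent (the relevant $2\times 2$ real determinant evaluates to $|\zeta_0|^2-|\zeta_0|^{-2}\neq 0$), while $\partial_\zeta\Phi$ has nonzero second component $\tfrac12(1-1/\zeta_0^2)$; hence $d\Phi$ has real rank $4$ at $(a_1^*,c_1^*,\zeta_0)$. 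By the implicit function theorem $\Phi$ is surjective onto a neighborhood of $z$, which is therefore covered by extremal shape-$C$ ellipses. This gives $V_K(w)=\log|h(w_2)|$ in a neighborhood of $z$, which is pluriharmonic. The main obstacle is combining the submersion computation with the openness of the inscription condition; both rely essentially on the hypothesis that $C$ touches $\partial K$ only at interior points of the two parallel segments, which simultaneously provides the normal form and the perturbative slack needed to spread the leafwise identity $V_K=u$ into a full neighborhood.
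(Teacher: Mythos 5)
Your proof is correct, and it reaches the same local identity $V_K(w)=\log|h(w_2)|$ as the paper, but by a genuinely different route at the key step. The paper normalizes so that $K\subset S=[-1,1]^2$ with $C$ extremal for both $K$ and $S$, and then gets the covering of a neighborhood of $z$ for free from the known continuous foliation of $\CC^2\setminus S$ by extremal ellipses of the square: it only has to check (via a convex-hull/translation estimate) that the nearby $S$-extremals remain inscribed in $K$, and then invokes Lemma \ref{lem:3.1} and the product formula $V_S=\max\{\log|h(z_1)|,\log|h(z_2)|\}$. You instead work with $K$ directly: you observe that tangency to both lines $x_2=\pm1$ already certifies maximality of area within the strip $\RR\times[-1,1]\supset K$ (so any shape-$C$ ellipse inscribed in $K$ is automatically extremal), and you replace the appeal to Baran's foliation of the square by an explicit rank computation showing that the map $\Phi(a_1,c_1,\zeta)$ is a submersion at $(a_1^*,c_1^*,\zeta_0)$; your determinant $|\zeta_0|^2-|\zeta_0|^{-2}\neq 0$ and the nonvanishing of $\tfrac12(1-1/\zeta_0^2)$ are correct for $|\zeta_0|>1$. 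A pleasant feature of your argument is that it never needs the swept-out ellipses to be pairwise disjoint (no foliation structure is used), only that each nearby point lies on \emph{some} extremal of the family; it is also self-contained in that it avoids (\ref{eqn:3.1}) and Lemma \ref{lem:3.1}. What it costs is the extra implicit-function-theorem bookkeeping and a slightly terse openness claim for the inscription condition, which should be spelled out as in the paper: away from the two tangency points $C_T$ sits at positive distance from $\partial K$, while near a tangency point in the interior of a flat segment, convexity gives a neighborhood in which $K$ coincides with the half-plane $\{x_2\leq 1\}$ (resp. $\{x_2\geq -1\}$), so the perturbed trace, whose second coordinate is still $\cos\theta$, stays in $K$. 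With that detail filled in, the argument is complete and includes the degenerate (line-segment) case.
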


\begin{proof}
First, we simplify the situation using Theorem \ref{thm:2.1} and the fact that pluriharmonicity is unaffected by linear transformations.  Hence we may assume that the parallel line segments lie on the lines $x_2=1$ and $x_2=-1$ and that $C$ is centered at the origin.    By rescaling and translating the $x_1$-axis, we may further assume that $K\subset S$, $S=[-1,1]\times[-1,1]$ as above, and that $C$ intersects $\partial K$ in the two points $(\alpha,1)$ and $(-\alpha,-1)$ where $0<\alpha<1$.  

Write $C_{T}=\{F(e^{i\theta}):\theta\in\RR\}$ for the real ellipse contained in $K$, where $F(t) = \rho(bt+\bar b/t)$ is the parametrization of $C$.  By elementary topology in $\RR^2$ there exists $\epsilon>0$ such that for any $s\in[-\epsilon,\epsilon]$, the translated sets $(s,0)+C_T$ are contained in $K$.  

By construction, $C$ is extremal for $S$ as well as for $K$, so for any $z\in C\setminus\RR^2 $, we have $V_K(z)= V_S(z)$.  We show that for any sufficiently close point $z'$ we also have $V_K(z')=V_S(z')$.  

\begin{figure} 
\begin{center}
\includegraphics[scale=0.4]{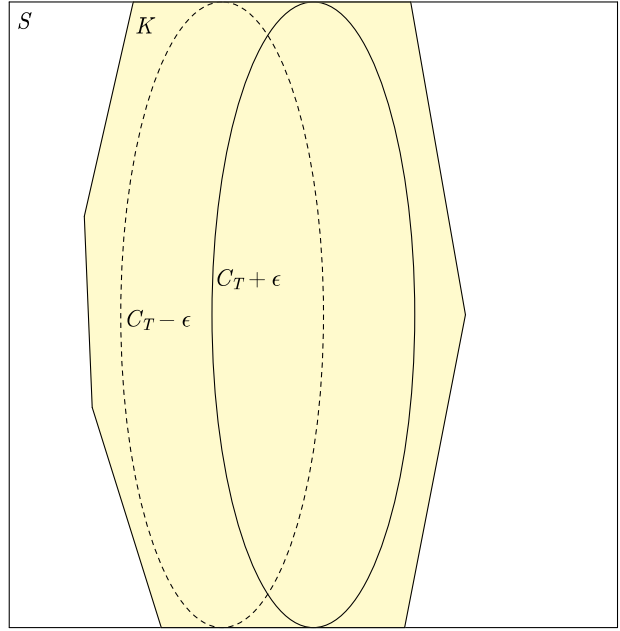}    
\end{center}
\caption{Proof of Theorem \ref{thm:3.2}}
\end{figure}

Consider an extremal ellipse $C'$ for $S$ containing a point $z'$ that is given by the parametrization $t\mapsto b't +\bar{b'}/t$.  By the continuity of the foliation for $V_S$ given by extremal ellipses centered at the origin, then 
given $\epsilon>0$ there is $\delta>0$ such that
$|z-z'|<\delta$ implies $|b-b'|<\epsilon/2$ and $|t_0-t_0'|<\epsilon/2$ where 
$$
z =ct_0+\bar c/t_0 \quad \hbox{ and } \quad z'= c't_0'+\bar{c'}/t_0'.
$$
Since $K\subset S$, to show that $C'$ is extremal for $K$ we only need to verify that $C'_T=(C'\cap S)\subset K$. Let $x=(x_1,x_2)\in C'_T$ and define $y_1$ by the condition that $y=(y_1,x_2)\in C_T$ is the closest point to $x$ with the same second coordinate.  We have 
\begin{eqnarray*}
x_1\ =\ b_1'e^{i\theta}+\bar{b_1'}e^{-i\theta}
& =&   b_1e^{i\theta}+\bar{b_1}e^{-i\theta} + (b_1'-b_1)e^{i\theta}+(\bar{b_1'}-\bar b_1)e^{-i\theta} \\ 
&=& y_1+(b_1'-b_1)e^{i\theta}+(\bar{b_1'}-\bar b_1)e^{-i\theta},
\end{eqnarray*}
where $y=(y_1,y_2)$ for some $y\in C_T$.  
  Hence $|x_1-y_1|\leq 2|b_1'-b_1|<\epsilon$.  So $C_T'$ is contained in the convex hull of $C_T-(\epsilon,0)$ and $C_T+(\epsilon,0)$, which in turn is contained in $K$. (See Figure 1.)  Therefore $C_T'$ is an extremal curve through $z'$ for both $S$ and $K$, and $V_K(z')=V_S(z')$ follows.  
  
  Applying the previous lemma, $V_K(z') = V_S(z')=\log|h(z_2')|$ for all $z'=(z_1',z_2')$ with $|z-z'|<\delta$.  So $V_K$ is pluriharmonic in a neighborhood of $z$.
\end{proof}

\begin{remark}\rm 
If $C$ is an extremal ellipse that does not intersect $\partial K$ in a pair of parallel line segments, then it is unique for its value of $c\in H_{\infty}$.   If $K$ contains parallel line segments elsewhere, we may get rid of this parallelism by modifying $K$ slightly (e.g. shaving off a thin wedge along one of the line segments).  This can be done without affecting $C$ and nearby extremals.  Hence in studying the local behaviour of extremal ellipses near a unique extremal $C$, we may assume  that uniqueness holds globally, and that extremal ellipses give a continuous foliation of $\CC^n\setminus K$ (by Theorem \ref{thm:2.2}).
\end{remark}
 
 We now turn to study the smoothness of the foliation at points on unique extremals.

\subsection{Extremal ellipses meeting $\partial K$ in two points}

As before, write $C$ to denote an extremal ellipse for $V_K$, $F$ its parametrization, and $C_T=\{F(e^{i\theta}):\theta\in\RR\}$ its trace on $K$.

\smallskip

{  \noindent NOTE: From now until the end of Section \ref{sec:3}, coordinates in $\RR^2\subset\CC^2$ will be denoted by $(x,y)$.  }

\bigskip

We will assume in what follows that $\partial K$ is at least $C^2$.  For a point $a\in\partial K$, denote by $T_a(\partial K)$ the tangent line to $\partial K$ that passes through $a$.

\begin{proposition}\label{prop:ex1}
Let $C_T\cap\partial K=\{a,b\}$.  If $\partial K$ is smooth at $a,b$, then the tangent lines $T_a(\partial K)$ and $T_b(\partial K)$ are parallel.
\end{proposition}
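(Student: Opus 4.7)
First observe that the proposition is equivalent to showing that $a$ and $b$ are diametrically opposite through the center of $C_T$, i.e.\ the midpoint of $\overline{ab}$ is the center $m$ of the ellipse.  Indeed, two points on an ellipse have parallel tangent lines if and only if they are related by reflection through the center; and since $\partial K$ is $C^2$ at $a,b$ and $C_T\subset K$ touches $\partial K$ from inside at these points, the tangent line to $C_T$ at each contact point must coincide with the tangent to $\partial K$ there.  Writing $n_a,n_b$ for the outward unit normals shared by $\partial K$ and $C_T$ at $a,b$, the desired conclusion is the single relation $n_b=-n_a$.

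My approach is a linearized perturbation argument based on the variational characterization (Theorem~\ref{thm:2.2}) of $C_T$ as the inscribed ellipse of maximal area in its homothety class.  Parametrizing $C_T$ as $G(\theta)=m+2(c_1\cos\theta-c_2\sin\theta)$ with $a=G(\alpha)$ and $b=G(\beta)$, I would consider the three-parameter family of translates and rescalings within the homothety class,
$$G_{\delta,v}(\theta)=(m+v)+(1+\delta)(G(\theta)-m),\qquad (v,\delta)\in\RR^2\times\RR.$$
First-order Taylor expansion in $t$ of the defining function of $K$ along the path $t\mapsto G_{t\delta,tv}(\alpha)$, using $C^2$ regularity of $\partial K$ at $a$, gives the necessary first-order obstruction
$$\langle n_a,\ v+\delta(a-m)\rangle\le 0$$
to having $G_{t\delta,tv}(\alpha)\in K$ for small $t>0$; the analogous inequality holds at $b$.

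Now suppose for contradiction that $n_b\neq -n_a$.  The Gauss map on the smooth convex curve $C_T$ is injective, so $n_a\neq n_b$, and hence $n_a,n_b$ are linearly independent.  Consequently the open cone $\{v\in\RR^2:\langle v,n_a\rangle<0,\ \langle v,n_b\rangle<0\}$ is nonempty; fixing any $\delta>0$ and taking $v$ in this cone with magnitude large enough to dominate the positive numbers $\delta\langle a-m,n_a\rangle$ and $\delta\langle b-m,n_b\rangle$ makes both linearized inequalities strict.  The principal technical step—and the main obstacle—is to promote this linearized strictness to an honest inclusion $G_{t\delta,tv}([0,2\pi])\subset K$ for all sufficiently small $t>0$.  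This should follow from a second-order Taylor argument near the contact parameters $\alpha,\beta$ (using non-degeneracy of the tangency there, so that $C_T$ locally bends inside $K$ strictly more sharply than $\partial K$), combined with a routine compactness estimate on the strict-interior portion $G([0,2\pi])\setminus\{a,b\}$.  Once this is in place, the perturbed ellipse sits inside $K$ with scale factor $1+t\delta>1$, contradicting the maximality of $C_T$ in its homothety class; hence $n_b=-n_a$, and the tangent lines $T_a(\partial K)$, $T_b(\partial K)$ are parallel.
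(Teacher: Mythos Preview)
Your overall strategy --- a perturbation argument exploiting the area-maximality of $C_T$ in its homothety class --- is the same as the paper's, but your execution is more elaborate and introduces an unjustified hypothesis at the key technical step.

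The paper's argument is shorter: assuming the tangent directions $\bv_a,\bv_b$ are not parallel, it chooses a single translation direction $\bv$ (no simultaneous rescaling) such that $C_T+t\bv$ lies in the \emph{interior} of $K$ for small $t>0$, and then observes that an interior ellipse can be expanded, contradicting extremality.  Decoupling the translation from the expansion is what makes the local analysis near $a,b$ purely first-order: since $\langle \bv,n_a\rangle<0$, the translated arc near $a$ satisfies $g(x)-\eta(x)+t\langle \bv,n_a\rangle + o(t)\le 0$ uniformly for $x$ in a fixed small neighborhood, using only $g\le\eta$ and continuity of $\eta'$.  No curvature comparison is needed.

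Your version couples translation and scaling into one family $G_{t\delta,tv}$, and you then propose to close the local step ``using non-degeneracy of the tangency there, so that $C_T$ locally bends inside $K$ strictly more sharply than $\partial K$.''  That hypothesis is not available: the proposition assumes only that $\partial K$ is smooth at $a,b$, and the curvatures of $C_T$ and $\partial K$ at a contact point can coincide (indeed, the paper devotes the subsequent analysis to precisely this degenerate case --- see the condition surrounding equation~(\ref{eqn:3.12})).  So as written, your justification for the technical step has a genuine gap.  The fix is simple: either drop $\delta$ from the perturbation and argue as the paper does, or observe that even in your coupled family the first-order strictness $\langle n_a,\,v+\delta(a-m)\rangle<0$ already suffices (by the same first-order Taylor argument in a fixed neighborhood, combined with compactness away from $a,b$), so the second-order curvature comparison you invoke is unnecessary.
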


\begin{proof}
Let $\bv_a,\bv_b$ be unit vectors parallel to $T_a(\partial K)$, $T_b(\partial K)$ respectively.  We may assume they are oriented so 
so that $\bv_a\cdot\bv_b\geq 0$. Suppose $\bv_a\neq\bv_b$.  Then  take any unit vector $\bv$ for which 
$$\bv_a\cdot \bv_b < \bv \cdot \bv_b < 1 = \bv_b \cdot \bv_b.$$
For $t > 0$ sufficiently small, the translated ellipse $C_{T,t} := C_T + t\bv$ is then contained in the interior of $K$ so that $C_{T,t}$ can be expanded to an ellipse with the same orientation and eccentricity as $C_T$.  This contradicts the fact that $C_T$ is extremal. 
\end{proof}

\bigskip

Let us start now by fixing an extremal curve $C$, with $C_T\subset K$, corresponding to 
a fixed value $c = c_0\in H_{\infty}$ and $\rho(c_0) =\rho_0$.  The parametrization of $C$ may be written as 
\begin{equation}\label{eqn:ref}
\zeta \mapsto (x_0,y_0) + \rho_0\Bigl( (1,c_0)\zeta + (1,\bar c_0)/\zeta\Bigr).
\end{equation}
(In the above equation, we identify $c_0$ with its representation in local coordinates, i.e., as a complex number $c_0\in\CC$).
\begin{figure}
\begin{center}
\includegraphics[scale=1.2]{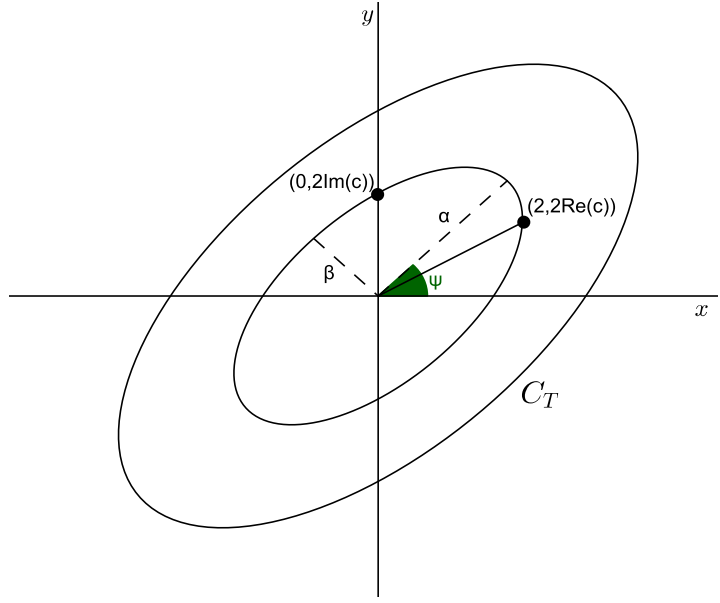}
\end{center}
\caption{The parameters $\alpha,\beta,\psi$ and their relation to $c$ in local coordinates. The smaller ellipse is the reference ellipse given by setting {  $(x_0,y_0)=(0,0)$ and $\rho_0=1$ in (\ref{eqn:ref}).}  Then $\alpha,\beta$ are the lengths of its axes and $\psi$ is the angle between the axis of length $\alpha$ and the horizontal axis.  Scaling by $\rho_0$ (and translating to $(x_0,y_0)$) yields the extremal ellipse $C_T$.   Note that in local coordinates, a reference ellipse is the unique ellipse, for a given eccentricity and orientation, that is centered at the origin and tangent to the vertical line $x=2$.}
\label{fig:2a}
\end{figure}

For convenience, we will use a more natural parametrization of $C_T$ from the point of view of real geometry, i.e., $C_T = F(\theta) =(F_1(\theta), F_2(\theta))$ where 
 \begin{eqnarray}
F_1(\theta) &=& \rho_0[ \alpha\cos\theta \cos \psi - \beta \sin \theta \sin \psi] + x_0, \label{eqn:3.2} \\
F_2(\theta) &=& \rho_0[\alpha \cos \theta \sin \psi + \beta \sin \theta \cos \psi] + y_0, \label{eqn:3.3}
\end{eqnarray}
Here $\alpha$, $\beta$ and $\psi$ incorporate the parameter $c\in H_{\infty}$; in local coordinates, they are real-analytic functions of $c$.  {  Precisely, $c$ determines $\psi$ and $\gamma:=\frac{\beta}{\alpha}$, which are scale-invariant parameters.}  (See Figure \ref{fig:2a} for the explicit geometry.)   By rotating coordinates, it is no loss of generality to assume that $\alpha\sin\psi\neq 0$, which we will assume in what follows. 


Differentiating (\ref{eqn:3.2}) and (\ref{eqn:3.3}), we have 
\begin{eqnarray}
F_1'(\theta) &=& \rho_0[-\alpha\sin\theta \cos\psi - \beta \cos\theta \sin\psi] + x_0, \label{eqn:3.4}\\
F_2'(\theta) &=& \rho_0[-\alpha\sin\theta \sin\psi + \beta \cos\theta \cos\psi] + y_0. \label{eqn:3.5}
\end{eqnarray}

Let $a = F(\theta_0)$ and $b = F(\theta_1)$ be the points of intersection with $\partial K$.  Write  
$K = \{(x, y)\in\RR^2 : r(x, y) = 0\}$ with $\nabla r(a), \nabla r(b) \neq (0, 0)$.  By Proposition \ref{prop:ex1}, $\theta_1 = \theta_0 + \pi$. We rotate
coordinates and normalize $r$ so that $\nabla r(a) = (0, 1)$ and $\nabla r(b) = (0,-\lambda)$ 
with $\lambda> 0$. Since
\begin{equation*}
\nabla r(a)\cdot F'(\theta_0)= \nabla r(b)\cdot F'(\theta_1)=0,
\end{equation*}
we have
$$
F_2'(\theta_0) = F_2'(\theta_1) =0.$$

Take $a = (0, 0)$ and write $\partial K$  
near $a$ as $\partial K = \{(s, \eta(s)) : |s| <\epsilon \}$ with $\eta(0) = \eta'(0) = 0$. Now we
consider variations in $s$, so that we consider the point 
$a(s) := (s, \eta(s))$
on $\partial K$. This determines the normal $\nabla r(a(s))$ and an ``antipodal''  point
$b(s) = (x(s), y(s))\in\partial K$ such that 
{  $\nabla r(b(s)) = \lambda\nabla r(a(s))$ for some
$\lambda=\lambda(s) < 0$.} We define the parameter $\theta_0(s)$ via the defining relation
\begin{equation*}
r_x(s,\eta(s))\frac{\partial F_1}{\partial\theta}(\theta_0(s)) + r_y(s,\eta(s))\frac{\partial F_2}{\partial\theta}(\theta_0(s)) = 0
\end{equation*}
and this defines $\theta_1(s) := \theta_0(s) +\pi$. We also write the center as 
$$(x_0(s), y_0(s)) = [a(s) + b(s)]/2 = (s/2 + x(s)/2, \eta(s)/2 + y(s)/2).$$
 Allowing $\rho$ and $c$ 
(i.e., $\rho$, $\gamma$ and $\psi$) to vary, we now consider $F$ as a function
$$F(\theta) = F(\theta_0(s), x_0(s), y_0(s),\rho,\alpha,\beta,\psi).$$

Consider the equations
\begin{eqnarray*}
A(s, \rho,\gamma,\psi) &:=& s - F_1(\theta_0(s), x_0(s), y_0(s), \rho,\gamma,\psi), \\
B(s, \rho,\gamma,\psi) &:=& \eta(s)- F_2(\theta_0(s), x_0(s), y_0(s),\rho,\gamma,\psi).
\end{eqnarray*}
We get a mapping $(s, \rho,\gamma,\psi)\mapsto(A,B)$ near $(0,\rho_0,\gamma_0,\psi_0)$ where
$\rho_0=\rho(c_0)$ for the parameter $c = c_0$ corresponding to $\gamma_0$ and  $\psi_0$; i.e., at $s = 0$.
Thus we have $A(0, \rho_0,\gamma_0,\psi_0) = B(0, \rho_0,\gamma_0,\psi_0) = 0$,
\begin{equation*} 
r_x(s,\eta(s))\frac{\partial F_1}{\partial\theta}(\theta_0(s)) + 
r_y(s,\eta(s))\frac{\partial F_2}{\partial\theta}(\theta_0(s)) = 0, \end{equation*}
  and
  \begin{equation*}
r_x(x(s),y(s))\frac{\partial F_1}{\partial\theta}(\theta_1(s)) + 
r_y(x(s),y(s))\frac{\partial F_2}{\partial\theta}(\theta_1(s)) = 0.
\end{equation*}

We want to find conditions for which
\begin{equation} \label{eqn:3.5a}
\det \left.\begin{pmatrix}
\frac{\partial A}{\partial s} & \frac{\partial A}{\partial\rho}   \\
\frac{\partial B}{\partial s} & \frac{\partial B}{\partial\rho}
\end{pmatrix} \right|_{s=0,\rho=\rho_0} \neq 0.
\end{equation}
Then by the implicit function theorem we can solve for $s$, $\rho$ near $0$, $\rho_0$ in terms of $\gamma$, $\psi$ (i.e., $c$) near $\gamma_0$, $\psi_0$.  

We write, for simplicity, $\theta_0=\theta_0(0)$ and $\theta_1=\theta_1(0)$ so that
$$
\frac{\partial F_2}{\partial \theta}(\theta_0) = \frac{\partial F_2}{\partial \theta}(\theta_1)=0.
$$
From (\ref{eqn:3.5}), $\frac{\partial F_2}{\partial\theta}(\theta_0)=0$ says that 
\begin{equation}\label{eqn:3.6}
\alpha\sin\theta_0\sin\psi = \beta\cos\theta_0\cos\psi.
\end{equation}

We compute the entries of the matrix in (\ref{eqn:3.5a}).  Below, prime $'$ denotes differentiation with respect to $s$.
$$
\frac{\partial A}{\partial s} = 1-\frac{\partial F_1}{\partial\theta}\theta' - \frac{\partial F_1}{\partial x_0}x_0'  - \frac{\partial F_1}{\partial y_0} y_0' . 
$$
From (\ref{eqn:3.2}), $\frac{\partial F_1}{\partial x_0} = 1$ and $\frac{\partial F_1}{\partial y_0}= 0$.  Thus
\begin{equation} \label{eqn:3.7}
\frac{\partial A}{\partial s} = 1 - \frac{\partial F_1}{\partial\theta}\theta' - x_0'.
\end{equation}

Next,
$$
\frac{\partial A}{\partial\rho} = -\frac{\partial F_1}{\partial\rho} = -(\alpha\cos\theta\cos\psi - \beta\sin\theta\sin\psi).
$$
Then
$$
\frac{\partial B}{\partial s} = \eta' -\frac{\partial F_2}{\partial\theta}\theta' - \frac{\partial F_2}{\partial x_0}x_0' - \frac{\partial F_2}{\partial y_0}y_0'
= \eta' - \frac{\partial F_2}{\partial \theta}\theta' - \frac{\partial F_2}{\partial y_0}
$$
since (\ref{eqn:3.3}) implies $\frac{\partial F_2}{\partial x_0} = 0$ and $\frac{\partial F_2}{\partial y_0} = 1$.  Moreover, at $s=0$, we have $\eta'(0)=0$ and $\frac{\partial F_2}{\partial\theta}(\theta_0)=0$ so that 
$$
\left.\frac{\partial B}{\partial s}\right|_{s=0}  \ = \   
\left.\frac{\partial F_2}{\partial y_0}\right|_{s=0} y_0'(0).
$$
But $y_0(s) = \frac{1}{2}(\eta(s) + y(s))$ so that
$$
y_0'(0) = \frac{1}{2}\bigl( \eta'(0) + y'(0) \bigr) = 0.
$$
Thus $\left.\frac{\partial B}{\partial s}\right|_{s=0} = 0$.

On the other hand, we claim that if $\alpha\sin\psi\neq 0$, then  $\left.\frac{\partial B}{\partial\rho}\right|_{s=0}\neq 0$.  By (\ref{eqn:3.3}),
$$
\frac{\partial B}{\partial\rho} = -\frac{\partial F_2}{\partial\rho} = -(\alpha\cos\theta\sin\psi + \beta\sin\theta\cos\psi).
$$
If on the contrary, $\left.\frac{\partial B}{\partial\rho}\right|_{s=0} =  0$, then using (\ref{eqn:3.6}),
\begin{eqnarray*}
\alpha\sin\theta_0\sin\psi &=& \beta\cos\theta_0\cos\psi \hbox{ and} \\
\alpha\cos\theta_0\sin\psi &=& -\beta\sin\theta_0\cos\psi.
\end{eqnarray*}
Multiplying the top equation by $\sin \theta_0$ and the bottom one by $\cos\theta_0$ and adding, we obtain $\alpha\sin\psi=0$, a contradiction.
Thus (\ref{eqn:3.5a}) holds precisely when $\left.\frac{\partial A}{\partial s}\right|_{s=0}\neq 0$. 

\bigskip

 We now show that
\begin{equation} \label{eqn:3.9a}
\left.\frac{\partial A}{\partial s}\right|_{s=0} \ =\  \frac{1}{2} + \Bigl(\frac{\partial F_1}{\partial\theta}(\theta_0)\Bigr)^2\frac{r_{xx}(0,0)}{y_0} - \frac{1}{2}\frac{r_{xx}(0,0)r_y(x(0),y(0))}{r_{xx}(x(0),y(0))r_y(0,0)}.
\end{equation}
To see this, recall first that $r_x(0,0)=0$ and $r_y(0,0)=1$.  Moreover,
$$
r_x(s,\eta(s))\frac{\partial F_1}{\partial\theta}(\theta(s)) + r_y(s,\eta(s))\frac{\partial F_2}{\partial\theta}(\theta(s))=0;
$$
differentiating this equation with respect to $s$ we get
$$
(r_{xx}+r_{yx}\eta')\frac{\partial F_1}{\partial\theta} + r_x\frac{\partial^2 F_1}{\partial\theta^2}\theta' + (r_{xy}+r_{yy}\eta')\frac{\partial F_2}{\partial\theta} + r_y\frac{\partial^2F_2}{\partial\theta^2}\theta'=0.
$$
Now at $s=0$, $\eta'(0)=\frac{\partial F_2}{\partial\theta}(\theta_0) =r_{x}(0,0)=0$; moreover, writing $F_2:= J+y_0$ we see that $\frac{\partial^2F_2}{\partial\theta^2}=-J$ so that
$$
\frac{\partial^2F_2}{\partial\theta^2}(\theta_0) = -J(\theta_0) = y_0 - F_2(\theta_0) = y_0.
$$
Hence
$$
r_{xx}(0,0)\frac{\partial F_1}{\partial\theta}(\theta_0)+ y_0\theta'(0) = 0
$$
and so
\begin{equation} \label{eqn:3.10a}
\theta'(0) = \frac{-r_{xx}(0,0)\frac{\partial F_1}{\partial\theta}(\theta_0)}{y_0}.
\end{equation}

To compute/rewrite $x_0'(0)$, we use the fact that $\nabla r(s,\eta(s)) = \lambda\nabla r(x(s),y(s))$.  This implies the relation
$$
r_x(x(s),y(s))\cdot r_y(s,\eta(s)) - r_y(x(s),y(s))\cdot r_x(s,\eta(s))=0.
$$
Differentiate this with respect to $s$, and set $s=0$:
\begin{eqnarray*}
&& \Bigl(r_{xx}(x(0),y(0))\cdot x'(0)\ + \  r_{xy}(x(0),y(0))\cdot y'(0)\Bigr)r_y(0,0) \\
&& \hskip3.5cm - \  \Bigl(r_{xx}(0,0)+r_{xy}(0,0)\cdot\eta'(0)\Bigr)r_y(x(0),y(0)) \ =\  0.
\end{eqnarray*}
Here we have used the fact(s) that $r_x(0,0) = r_x(x(0),y(0))=0$.  But we also have $\eta'(0)=y'(0)=0$,  $r_y(0,0)=1$, and $r_y(x(0),y(0))=-\lambda$; and so 
$$
r_{xx}(x(0),y(0))\cdot x'(0) + \lambda r_{xx}(0,0) = 0;
$$
i.e.,
$$
x'(0) = \frac{-\lambda r_{xx}(0,0)}{r_{xx}(x(0),y(0))}.
$$
Now $x_0(s)=\frac{1}{2}(s+x(s))$, so $x_0'(0)=\frac{1}{2}(1+x'(0))$, and so
\begin{equation} \label{eqn:3.10}
x_0'(0) = \frac{1}{2} - \frac{1}{2}\frac{\lambda r_{xx}(0,0)}{r_{xx}(x(0),y(0))}.
\end{equation}
Plugging (\ref{eqn:3.10a}) and (\ref{eqn:3.10}) into (\ref{eqn:3.7}) yields (\ref{eqn:3.9a}).

\bigskip

We now analyze the situation when $\left.\frac{\partial A}{\partial s}\right|_{s=0}=0$.  From (\ref{eqn:3.9a}), this occurs precisely when
\begin{eqnarray} \label{eqn:3.12}
\frac{\left(\frac{\partial F_1}{\partial\theta}(\theta_0)\right)^2}{y_0} &=& \frac{1}{2}\Bigl( \frac{1}{r_{xx}(0,0)} \ + \ \frac{r_y(x(0),y(0))}{r_y(0,0)r_{xx}(x(0),y(0))}\Bigr) \\
&=& \frac{1}{2}\Bigl( \frac{1}{r_{xx}(0,0)} \ + \ \frac{r_y(x(0),y(0)}{r_{xx}(x(0),y(0))}    \Bigr). \nonumber
\end{eqnarray}
Since $r_x(0,0)=r_x(x(0),y(0))=0$ and $r_y(0,0)=1$, the right-hand side of (\ref{eqn:3.12}) is the average of the radii of the osculating circles of $\partial K$ at $a=(0,0)$ and $b=(x(0),y(0))$.  

\smallskip

We claim that the left-hand side of (\ref{eqn:3.12}) is the average of the radii of the osculating circles of $C_T$ at $a$ and $b$.  To see this, note from (\ref{eqn:3.3}) and (\ref{eqn:3.4}) (and $\frac{\partial^2F_2}{\partial\theta^2}(\theta_0)=y_0$),  that 
$$
\frac{\left(\frac{\partial F_1}{\partial \theta}(\theta_0)\right)}{y_0} = \frac{\rho(-\alpha\sin\theta_0\cos\psi - \beta\cos\theta_0\sin\psi)^2}{-(\alpha\cos\theta_0\sin\psi + \beta\sin\theta_0\cos\psi)}.
$$
Let us rotate coordinates so that $\sin\psi=1$; i.e., $\psi=\frac{\pi}{2}$, and assume $\theta_0=0$.  Then
$$
\frac{\left(\frac{\partial F_1}{\partial\theta}(\theta_0)\right)^2}{y_0} = \frac{\rho\beta^2}{\alpha}.
$$
On the other hand, $C_T$ now has the parametrization 
$$
 F_1(\theta) = -\rho\beta\sin\theta + x_0, \quad 
 F_2(\theta) = \rho\alpha\cos\theta + y_0,  
$$ 
and the curvature of $C_T$ as a function of $\theta$ is 
$$
\kappa(\theta) = \frac{1}{\rho}\cdot
\frac{\alpha\beta}{(\beta^2\cos^2(\theta)+\alpha^2\sin^2\theta)^{3/2}}.
$$
At $\theta=0,\pi$ we get 
$$
\kappa(0) = \kappa(\pi) = \frac{\alpha}{\beta^2\rho}
$$
as claimed (precisely, 
$\frac{1}{2}\Bigl(\frac{1}{\kappa(0)}+\frac{1}{\kappa(\pi)}\Bigr) = \frac{1}{2}\cdot\frac{2\beta^2\rho}{\alpha} = \frac{\rho\beta^2}{\alpha}$.)

\begin{remark}\rm
\begin{itemize}
\item[(i)] Note that the radii of the osculating circles for $\partial K$ at the points $a,b$ are at least as large as those for $C_T$ since $C_T$ is inscribed in $\partial K$.  Thus the condition $\left.\frac{\partial A}{\partial s}\right|_{s=0}=0$ fails if the curvature of $C_T$ is strictly less than that of $\partial K$ at either $a$ or $b$.
\item[(ii)] A degenerate ellipse (i.e. line segment) occurs when $\beta=0$. A careful examination of the preceding calculations shows that (\ref{eqn:3.5a}) always holds in this case.
\end{itemize}
\end{remark}

{  When $\partial K$ is $C^r$, the implicit function theorem shows that $s$ and $\rho$ can be solved in terms of $\gamma,\psi$ (equivalently, $c\in H_{\infty}$) as $C^r$ functions.  This implies that locally, the center $(x_0(s),y_0(s))$ is a $C^r$ function of $c$, and must therefore coincide with $a(c)$ given in equation (\ref{eqn:1.}).  Similarly, the scale factor $\rho(c)$ is also a  $C^r$ function of $c$.   Altogether, this shows that the foliation of extremal ellipses near $C_T$ is $C^r$.}

The smoothness of the foliation at a point $z\in\CC^n\setminus K$ in turn implies the smoothness of $V_K$ at $z$, as the partial derivatives $\partial/\partial z_j$ may be computed explicitly in terms of foliation parameters using the chain rule.   We summarize this in the following theorem.

\begin{theorem} \label{thm:3a}
Suppose $z\in\CC^2\setminus K$ lies on an extremal ellipse $C$ for $K$ with the following properties: 
\begin{enumerate}
\item The intersection $\partial K\cap C$ is exactly two points.
\item $\partial K$ is $C^r$ ($r\geq 2$) in a neighborhood of $\partial K\cap C$.
\item For at least one of these intersection points, the curvature of $C_T$ is strictly greater than the curvature of $\partial K$ at this point.
\end{enumerate}
   Then $V_K$ is $C^r$ in a neighborhood of $z$. \qed
\end{theorem}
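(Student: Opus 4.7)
The plan is to invoke the implicit function theorem on the system $(A,B) = 0$ set up in the preceding analysis to produce a $C^r$ parametrization of the leaves of the foliation near $C$, and then transfer this regularity to $V_K$ via the chain rule applied to the leafwise formula $V_K\circ F(\zeta) = \log|\zeta|$.

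First I would recall the setup. Let $a,b$ denote the two points of $\partial K \cap C$. By Proposition \ref{prop:ex1} we may rotate coordinates so that $\theta_1 = \theta_0+\pi$, $a=(0,0)$, $\nabla r(a)=(0,1)$, $\nabla r(b) = -\lambda(0,1)$ with $\lambda>0$, and $\alpha\sin\psi\neq 0$ (a generic rotation), and write $\partial K$ near $a$ as $y=\eta(s)$ with $\eta(0)=\eta'(0)=0$. Define $A,B$ as in the preceding derivation, so that $(A,B)(0,\rho_0,\gamma_0,\psi_0)=(0,0)$, where $c_0\in H_\infty$ corresponds to $(\gamma_0,\psi_0)$ and $\rho(c_0)=\rho_0$. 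Since $\partial K$ is $C^r$ in a neighborhood of $\{a,b\}$, $\eta$ and the antipode map $s\mapsto b(s)$ are $C^{r-1}$ in $s$ (with $\eta$ itself $C^r$), and the dependence on $(\rho,\gamma,\psi)$ is analytic; thus $A,B$ are jointly $C^{r-1}$ with $A,B$ in fact $C^r$ in $s$.

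Second, I would verify the Jacobian condition (\ref{eqn:3.5a}) under hypothesis (3). The computations already carried out in the excerpt yield $\left.\partial B/\partial s\right|_{s=0}=0$ and $\left.\partial B/\partial\rho\right|_{s=0}\neq 0$ (the latter using $\alpha\sin\psi\neq 0$ and (\ref{eqn:3.6})), so the $2\times 2$ Jacobian equals $-\left.\partial A/\partial s\right|_{s=0}\cdot\left.\partial B/\partial\rho\right|_{s=0}$, and it suffices to show $\left.\partial A/\partial s\right|_{s=0}\neq 0$. Formula (\ref{eqn:3.9a}), combined with the subsequent identification of both sides of (\ref{eqn:3.12}) as averages of radii of osculating circles, shows that $\left.\partial A/\partial s\right|_{s=0}=0$ is equivalent to
\[
\tfrac{1}{2}\bigl(R_{C_T}(a)+R_{C_T}(b)\bigr) \;=\; \tfrac{1}{2}\bigl(R_{\partial K}(a)+R_{\partial K}(b)\bigr),
\]
where $R$ denotes the radius of curvature. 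Since $C_T$ is inscribed in $K$ and tangent to $\partial K$ at $a$ and $b$, we have $R_{C_T}(a)\leq R_{\partial K}(a)$ and similarly at $b$; hypothesis (3) says one of these inequalities is strict, so the averages differ and the Jacobian is nonzero.

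Third, by the implicit function theorem there exist $C^r$ functions $s=s(\gamma,\psi)$ and $\rho=\rho(\gamma,\psi)$ defined near $(\gamma_0,\psi_0)$ with values near $(0,\rho_0)$, solving $A=B=0$. By the remark following Theorem \ref{thm:3.2} we may assume that extremals are globally unique without altering $C$ or nearby leaves, so these solutions coincide with the foliation parameters $a(c),\rho(c)$ of (\ref{eqn:1.}). The map
\[
\Phi(c,\zeta)\;=\;a(c)+\rho(c)\bigl(c\zeta+\bar c/\zeta\bigr)
\]
is therefore $C^r$ in $(c,\zeta)$ in a neighborhood of $(c_0,\zeta_0)$, where $z=\Phi(c_0,\zeta_0)$ with $|\zeta_0|>1$ since $z\notin K$. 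A short computation (using that $c_0$ corresponds to a nondegenerate ellipse, together with the basic rank statement for the foliation in Theorem \ref{thm:2.2}) shows that the real Jacobian of $\Phi$ at $(c_0,\zeta_0)$ is invertible, so $\Phi$ is a local $C^r$ diffeomorphism onto a neighborhood of $z$.

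Finally, on each leaf the extremal property gives $V_K\circ\Phi(c,\zeta) = \log|\zeta|$, which is smooth for $|\zeta|>1$. Composing with $\Phi^{-1}$ then expresses $V_K$ as a $C^r$ function on a neighborhood of $z$, completing the proof. The main obstacle is the curvature-comparison step: verifying that hypothesis (3) indeed forces the two weighted sums of radii in (\ref{eqn:3.12}) to be unequal; once that is in place, the implicit function theorem and chain rule do the rest.
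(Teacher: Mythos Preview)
Your proposal is correct and follows essentially the same route as the paper: use the curvature comparison (inscribed ellipse forces $R_{C_T}\le R_{\partial K}$ at both points, with strict inequality at one) to conclude $\partial A/\partial s|_{s=0}\neq 0$, apply the implicit function theorem to $(A,B)=0$ to obtain $C^r$ dependence of $(s,\rho)$ on $(\gamma,\psi)$, and then transfer this regularity to $V_K$ via the leafwise identity $V_K\circ\Phi(c,\zeta)=\log|\zeta|$. Your final step, passing through the local diffeomorphism $\Phi$ and composing with $\Phi^{-1}$, is simply a more explicit rendering of what the paper summarizes as ``partial derivatives may be computed in terms of foliation parameters using the chain rule.''
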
 

\def\calC{\mathcal{C}}

\begin{remark} \label{rmk:3a}
\rm
Parameters $c\in H_{\infty}$ corresponding to extremal curves for $V_K$ that intersect $\partial K$ in two points but do not satisfy the curvature condition (3) form a set of real dimension at most $1$.  One way to see this is to consider the collection of all extremal ellipses $\calC_a$ that intersect $\partial K$ at a point $a$.  Then $\calC_a$ is parametrized by a subset of (real) dimension 1 in $H_{\infty}$, and $\bigcup_{a\in\partial K}\calC_a = \CC^2$.  For each $a$, however, there is at most one ellipse parameter $c_a$ for which the curvature of the extremal ellipse coincides with the curvature of $\partial K$.  Now as $a$ varies smoothly over the curve $\partial K$, $c_a$ varies smoothly over a one-dimensional subset of  $H_{\infty}$. Hence $\bigcup_{a\in\partial K}\{c_a\}$ is at most $1$-dimensional. \end{remark}

\begin{example}\label{ex:realdisk} \rm
Consider $K$ to be the real unit disk,  $K=\{(x,y)\in\RR^2\subset\CC^2: x^2+y^2\leq 1\}$.  Then by Lundin's formula \cite{lundin:extremal},
$$
V_K(z)=\frac{1}{2}\log^+(|z_1|^2+|z_2|^2+|z_1^2+z_2^2-1|).
$$
On $\CC^2\setminus K$ this function is nonsmooth precisely on the complex ellipse $C$ given by  $z_1^2+z_2^2=1$.  In this case $C_T=\partial K$, so trivially the curvatures are equal.
\end{example}

\subsection{Extremal ellipses meeting $\partial K$ in three points}

Suppose an extremal ellipse $C$ meets $\partial K$ in three points: $$C\cap\partial K = \{(x_1,y_1),(x_2,y_2),(x_3,y_3)\}.$$
In this case, the ellipse is necessarily non-degenerate.

As before, we parametrize $C_T$ via $F(\theta)=(F_1(\theta),F_2(\theta))$ as in equations (\ref{eqn:3.2}) and (\ref{eqn:3.3}).  For $i=1,2,3$ define $\theta_i\in[0,2\pi)$ via $F(\theta_i)=(x_i,y_i)$.

We want to analyze this set-up under variations of $\beta$ and $\psi$ (i.e., $c$).  
We will assume that coordinates have been chosen so that $\partial K$ can be represented as a graph over either $x$ or $y$ near each point $(x_i,y_i)$; in particular, we will assume that for each $i=1,2,3$ there exist smooth functions $\eta_i$ with
\begin{itemize}
\item $\partial K = \{(x,y): y=\eta_1(x)\}$ near $(x_1,y_1)$;
\item $\partial K = \{(x,y): x=\eta_2(y)\}$ near $(x_2,y_2)$; and 
\item $\partial K = \{(x,y): y=\eta_3(x)\}$ near $(x_3,y_3)$.
\end{itemize}

Now consider variations in $\beta$ and $\psi$, and consider the variables $\rho,x_0,y_0$ and $x_i,y_i,\theta_i$, $i=1,2,3$ to be dependent on these variations.  In total, there are $12$ dependent variables.

We will eliminate eight of these variables.  Using the functions $\eta_i$, we immediately eliminate $y_1$, $x_2$ and $y_3$.  We now proceed to eliminate $\theta_i$, $i=1,2,3$.  We use the fact that the intersection $C_T\cap\partial K$ is tangential at each point $(x_i,y_i)$.  For $i=1$, this says that
$$
(F_1'(\theta_1),F_2'(\theta_1)) = \lambda(1,\eta_1'(x_1));
$$
i.e., $F_1'(\theta_1)\eta_1'(x_1) = F_2'(\theta)$.  Explicitly, using equations (\ref{eqn:3.2}) and (\ref{eqn:3.3}) we obtain
\begin{equation}\label{eqn:3.12a}
\tan(\theta_1) = \frac{\beta(\cos\psi + \eta_1'(x_1)\sin\psi)}{\sin\psi - \eta_1'(x_1)\cos\psi}.
\end{equation}
Locally, we may take the principal branch of arctan (that gives angles in $[0,2\pi)$) to obtain the function $\theta_1(x_1,\beta,\psi)$, and hence eliminate $\theta_1$ as a dependent variable.  Similarly we can do the same for $\theta_2$ and $\theta_3$.

The last two variables we will eliminate are $x_0$ and $y_0$.  First, define the following functions (for notational convenience we suppress their dependence on the variables $\beta,\psi,\rho,x_0,y_0$):
\begin{eqnarray*}
A_1(x_1) &=& F_1(\theta_1) - x_1, \\
B_1(x_1) &=& F_2(\theta_1) - \eta_1(x_1), \\
A_2(y_2) &=& F_1(\theta_2) - \eta_2(y_2), \\
B_2(y_2) &=& F_2(\theta_2) - y_2, \\
A_3(x_3) &=& F_1(\theta_3) - x_3, \hbox{ and} \\
B_3(x_3) &=& F_2(\theta_3)-\eta_3(x_3).
\end{eqnarray*}
The geometric condition that the points $(x_i,y_i)$ are intersections of $C_T$ with $\partial K$ says that $A_i=B_i=0$, $i=1,2,3$. 

Define $S_1(\theta):=F_1(\theta)-x_0$ and $S_2(\theta):=F_2(\theta)-y_0$; then $A_3=B_3=0$ says that 
$$
x_0 = x_3 - S_1(\theta_3) \ \hbox{ and } \  y_0 = \eta_3(x_3) - S_2(\theta_3).
$$
Using this to eliminate $x_0,y_0$,  the system of equations reduces to
\begin{eqnarray*}
A_1 &=& S_1(\theta_1) - S_1(\theta_3) + x_3-x_1, \\
B_1 &=& S_2(\theta_1) - S_2(\theta_3) + \eta_3(x_3) - \eta_1(x_1), \\
A_2 &=& S_1(\theta_2) - S_1(\theta_3) +x_3 -\eta_2(y_2), \hbox{ and} \\
B_2 &=& S_2(\theta_2) -S_2(\theta_3) + \eta_3(x_3) - y_2.
\end{eqnarray*}
In summary, we have a map $M:(x_1,y_2,x_3,\rho,\beta,\psi)\mapsto(A_1,A_2,B_1,B_2)$ where the geometric condition that $C_T$ is inscribed in $\partial K$ implies that $M=0$.

We can solve for $x_1,y_2,x_2,\rho$ in terms of $\beta$ and $\psi$ provided the Jacobian matrix
$$
JM=\begin{pmatrix}
\frac{\partial A_1}{\partial x_1} & \frac{\partial A_1}{\partial y_2} & \frac{\partial A_1}{\partial x_3} & \frac{\partial A_1}{\partial\rho} \\
\frac{\partial B_1}{\partial x_1} & \frac{\partial B_1}{\partial y_2} & \frac{\partial B_1}{\partial x_3} & \frac{\partial B_1}{\partial\rho} \\
\frac{\partial A_2}{\partial x_1} & \frac{\partial A_2}{\partial y_2} & \frac{\partial A_2}{\partial x_3} & \frac{\partial A_2}{\partial\rho} \\
\frac{\partial B_2}{\partial x_1} & \frac{\partial B_2}{\partial y_2} & \frac{\partial B_2}{\partial x_3} & \frac{\partial B_2}{\partial\rho} 
\end{pmatrix}
$$
has nonzero determinant.  Note that $\frac{\partial A_1}{\partial y_2}=\frac{\partial B_1}{\partial y_2}= \frac{\partial A_2}{\partial x_1}=\frac{\partial B_2}{\partial x_1}=0$.


\begin{figure}
\begin{center}
\includegraphics[scale=0.15]{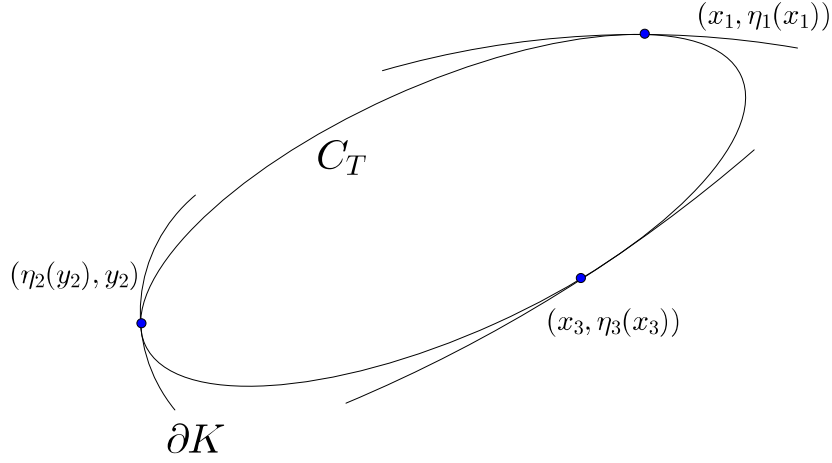}
\end{center}
\caption{Local parametrizations of $\partial K$}
\label{fig:2} 
\end{figure}

Fix an initial inscribed ellipse, and denote its parameters by $x_{i0},y_{i0}$, ($i=1,2,3$) and $\rho_0$.  To simplify the computations, without loss of generality we may assume that 
$$
\eta_1'(x_1)\bigr|_{x_1=x_{10}}=0, \ \  \eta_2'(y_2)\bigr|_{y_2=y_{20}}=0, \ \hbox{ and } \  
\eta_3'(x_3)\bigr|_{x_3=x_{30}}>0
$$
by applying a linear change of coordinates (see Figure \ref{fig:2}).  In these coordinates, the tangency of $C_T$ at its intersections with $\partial K$ says that $S_2'(\theta_1)\bigr|_{\theta_1=\theta_1(x_{10})} = 0$, and so
\begin{equation}\label{eqn:3.13a}
\left.\frac{\partial B_1}{\partial x_1}\right|_{x_1=x_{10}} = 
\left[S_2'(\theta_1)\frac{\partial\theta_1}{\partial x_1} - \eta_1'(x_1)\right]_{x_1=x_{10}} = 0;
\end{equation}
a similar argument also gives $\left.\frac{\partial A_2}{\partial y_2}\right|_{y_2=y_{20}} = 0$.  Hence
\begin{equation}\label{eqn:3.13}
\det(JM) = 
\det\begin{pmatrix}
\frac{\partial A_1}{\partial x_1} & 0 & \frac{\partial A_1}{\partial x_3} & \frac{\partial A_1}{\partial\rho} \\
0&0 & \frac{\partial B_1}{\partial x_3} & \frac{\partial B_1}{\partial\rho} \\
0&0 & \frac{\partial A_2}{\partial x_3} & \frac{\partial A_2}{\partial\rho} \\
0 & \frac{\partial B_2}{\partial y_2} & \frac{\partial B_2}{\partial x_3} & \frac{\partial B_2}{\partial\rho} \\
\end{pmatrix} = \frac{\partial A_1}{\partial x_1} \frac{\partial B_2}{\partial y_2} \det(M_1),
\end{equation}
where $M_1 = \begin{pmatrix} 
\frac{\partial B_1}{\partial x_3} & \frac{\partial B_1}{\partial\rho}\\
\frac{\partial A_2}{\partial x_3} & \frac{\partial A_2}{\partial\rho}  \end{pmatrix}$.

We derive conditions under which each factor on the right-hand side of (\ref{eqn:3.13}) is nonzero.  First,
\begin{eqnarray} \label{eqn:3.14}
\frac{\partial A_1}{\partial x_1} &=&  S_1'(\theta_1)\frac{\partial \theta_1}{\partial x_1} - 1, \hbox{ and}\\  \label{eqn:3.15}
\frac{\partial B_2}{\partial y_2} &=& S_2'(\theta_2)\frac{\partial\theta_2}{\partial y_2} - 1.
\end{eqnarray}

\def\be{\mathbf{e}}

We analyze $\det(M_1)$.  For convenience, translate coordinates to the origin, i.e., put $(x_{30},y_{30})=(0,0)$.  We then rotate coordinates as follows.
  Let $\be_1=(1,0)$ and $\be_2=(0,1)$ denote the standard basis in $\RR^2$; let $R_{\alpha} = \begin{pmatrix} \cos\alpha & \sin\alpha \\
  -\sin\alpha & \cos\alpha \end{pmatrix}$, where $\alpha\in(0,\frac{\pi}{2})$ is given by $\tan\alpha = \eta_3'(0)$; now let $\be_1' = R_{\alpha}^{-1}\be_1$ and  $\be_2' = R_{\alpha}^{-1}\be_2$.  Let $(\cdot)_{R_{\alpha}}$ denote coordinates and matrices written with respect to this new basis, e.g.,
  $$
  (\tilde x,\tilde y)_{R_{\alpha}} = \tilde x\be_1' + \tilde y\be_2'.
  $$
  In these coordinates, the common tangent to $\partial K$ and $C_T$ at $0=(0,0)_{R_{\alpha}}$ has no  second (i.e., $\be_2'$) component.  This says that $\tilde\eta_3'(0)=0$ where $(\tilde x,\tilde\eta_3(\tilde x))_{R_{\alpha}} = (x,\eta_3(x))$.
  
  In what follows, tilded quantities (e.g., $\tilde S_1$, $\tilde S_2$) denote quantities expressed with respect to rotated coordinates. We calculate that
  \begin{eqnarray*}
  R_{\alpha}\begin{pmatrix} \frac{\partial A_2}{\partial x_3} \\ \frac{\partial B_1}{\partial x_3}
  \end{pmatrix}
   &=& R_{\alpha}\begin{pmatrix}
   -1 + S_1'(\theta_3)\frac{\partial\theta_3}{\partial x_3} \\ -\eta_3'(x_3) + S_2'(\theta_3)\frac{\partial\theta_3}{\partial x_3}
   \end{pmatrix} \\
   &=& R_{\alpha}\begin{pmatrix}
   -1 + \tilde S_1'(\theta_3)\frac{\partial\theta_3}{\partial\tilde x_3} \\
   -\tilde\eta_3'(\tilde x_3) + \tilde S_2'(\theta_3)\frac{\partial\theta_3}{\partial\tilde x_3}
   \end{pmatrix}_{R_{\alpha}} \frac{\partial\tilde x_3}{\partial x_3} \\
   &=& R_{\alpha}R_{\alpha}^{-1}\begin{pmatrix}
   -1 + \tilde S_1'(\theta_3)\frac{\partial\theta_3}{\partial \tilde x_3} \\ 0
   \end{pmatrix}\cos\alpha,
  \end{eqnarray*}
  where zero in the bottom component above follows from $\tilde\eta_3'(0) = 0$, by the same argument that gave (\ref{eqn:3.13a}) earlier.
  
  Define $B_{R_{\alpha}}$ and $A_{R_{\alpha}}$ by 
  $\begin{pmatrix}A_{R_{\alpha}} \\ B_{R_{\alpha}}\end{pmatrix} := R_{\alpha}\begin{pmatrix}{\frac{\partial A_2}{\partial\rho}}\\{\frac{\partial B_1}{\partial\rho}}\end{pmatrix}$.  With $E = \begin{pmatrix} 0&1\\1&0
  \end{pmatrix}$, we have
\begin{eqnarray*}
\det(M_1) =-\det(ER_{\alpha}M_1) &=& \det\begin{pmatrix}
0 & B_{R_{\alpha}} \\
\bigl(1 - \tilde S_1'(\theta_3)\frac{\partial\theta_3}{\partial\tilde x_3}\bigr)\cos\alpha & A_{R_{\alpha}}
\end{pmatrix} \\
&=& \bigl(\tilde S_1'(\theta_3)\frac{\partial\theta_3}{\partial \tilde x_3} -1\bigr)(\cos\alpha)B_{R_{\alpha}}.
\end{eqnarray*}
Therefore, $\det(JM)\neq 0$ holds if and only if none of (\ref{eqn:3.14}), (\ref{eqn:3.15}),
\begin{eqnarray} \label{eqn:3.16} 
\tilde S_1'(\theta_3)\frac{\partial\theta_3}{\partial\tilde x_3} &=& 1, \ \hbox{ or} \\
\label{eqn:3.17} B_{R_{\alpha}} &=& 0
\end{eqnarray}
hold.

We analyze (\ref{eqn:3.14}).  Differentiating (\ref{eqn:3.2}) and (\ref{eqn:3.12a}) (the latter implicitly), we obtain
$$
1 \ = \ S_1'(\theta_1)\frac{\partial\theta_1}{\partial x_1} \ = \  
-\frac{\rho\eta_1''(x_1)}{\beta}\cdot\frac{\cos^2\theta_1(\sin^2\theta_1\cos\psi + \beta\cos\theta_1\sin\psi)}{(\sin\psi - \eta_1'(x_1)\cos\psi)^2}\, .
$$
Using the fact that $\eta_1'(x_1)=0$, we can simplify this to 
$$
\frac{1}{\eta_1''(x_1)} \ = \ -\rho\beta\left(
\frac{\sin\theta_1\cos^2\theta_1}{\tan\psi\sin\psi} + 
\frac{\beta\cos^3\theta_1}{\sin\psi}\right),
$$
and also simplify (\ref{eqn:3.12a}) to $\tan\psi = \frac{\beta}{\tan\theta_1}$; hence $\sin\psi = \frac{\beta\cos\theta_1}{\sqrt{\beta^2\cos^2\theta_1+\sin^2\theta_1}}$.  Eliminating $\psi$, we obtain
\begin{eqnarray*}
\frac{1}{\eta_1''(x_1)} &=& -\frac{\rho}{\beta}\sqrt{\beta^2\cos^2\theta_1 + \beta^2\cos^2\theta_1(\sin^2\theta_1+ \beta^2\cos^2\theta_1)} \\
&=& -\frac{\rho}{\beta}(\sin^2\theta_1+\beta^2\cos^2\theta_1)^{\frac{3}{2}}.
\end{eqnarray*}
Now, note that $\kappa_{\partial K}(x_1) = \eta_1''(x_1)$ and $\kappa_{C_T}(x_1) = \frac{\beta}{\rho}(\sin^2\theta_1 + \beta^2\cos^2\theta_1)^{-\frac{3}{2}}$, where $\kappa_{\partial K}(x_1)$ (resp., $\kappa_{C_T}(x_1)$) denotes the curvature of $\partial K$ (resp., $C_T$) at the point $x_1$.  Hence
$$
S_1'(\theta_1)\frac{\partial\theta_1}{\partial x_1}=1  \iff 
\kappa_{C_T}(x_1) = \kappa_{\partial K}(x_1).
$$
By (\ref{eqn:3.14}), the above condition in turn is equivalent to  $\frac{\partial A_1}{\partial x_1}=0$.

Similar calculations as above show that 
\begin{eqnarray*}
S_2'(\theta_2)\frac{\partial\theta_2}{\partial y_2} = 1 &\iff& \kappa_{C_T}(x_2) = \kappa_{\partial K}(x_2), \hbox{  and} \\
\tilde S_1'(\theta_3)\frac{\partial\theta_3}{\partial \tilde x_3} = 1 &\iff&  \kappa_{C_T}(x_3) = \kappa_{\partial K}(x_3).
\end{eqnarray*} 
Using (\ref{eqn:3.14}), (\ref{eqn:3.15}) and (\ref{eqn:3.16}), we obtain the following geometric criterion:
\begin{itemize}
\item[($\star$)] {\it If $\det JM\neq 0$ then the curvature of $C_T$ is strictly greater than the curvature of $\partial K$ at each of the three intersection points $(x_i,y_i)$, $i=1,2,3$.}
\end{itemize}

It remains to show that (\ref{eqn:3.17}) always fails.  For if not, then
$$
0=B_{R_{\alpha}} = (S_2(\theta_1)-S_2(\theta_3))\cos\alpha - (S_1(\theta_2) - S_1(\theta_3))\sin\alpha,
$$
i.e., $$\tan\alpha = \frac{S_2(\theta_1)-S_2(\theta_3)}{S_1(\theta_2)-S_1(\theta_3)}.$$  However, interpreting each side of the above equation geometrically, $\tan\alpha=\eta_3'(x_3)$ is the slope of $\partial K$ at $(x_3,y_3)$, while on the other hand, $\frac{S_2(\theta_1)-S_2(\theta_3)}{S_1(\theta_2)-S_1(\theta_3)}$ is minus the slope of the line connecting $\bv_1$ and $\bv_2$, where for $i=1,2$, $\bv_i$ denotes the closest point to $(x_3,y_3)$ that lies on the tangent line to $\partial K$ through $(x_i,y_i)$.  Hence $\tan\alpha>0>\frac{S_2(\theta_1)-S_2(\theta_3)}{S_1(\theta_2)-S_1(\theta_3)}$, a contradiction.  So (\ref{eqn:3.17}) fails.

This shows that if condition ($\star$) holds, then $\det(JM)\neq 0$ and locally we may solve for $(x_1,y_2,x_3,\rho)$ as functions of $(\beta,\psi)$, and hence for $\rho$ and $a_0=(x_0,y_0)$ as functions of $c$.  If $\partial K$ is $C^r$ ($r\geq 2$), then the foliation for $V_K$ is locally $C^r$ at points on $C$, and hence so is $V_K$.  

We have proved the following result.

\begin{theorem}\label{thm:3b}
Suppose $z\in\CC^2\setminus K$ lies on an extremal ellipse $C$ for $K$ with the following properties. 
\begin{enumerate}
\item The intersection $\partial K\cap C$ is exactly three points.
\item $\partial K$ is $C^r$ ($r\geq 2$) in a neighborhood of $\partial K\cap C$.
\item The curvature of $C_T$ is strictly greater than the curvature of $\partial K$ at each of the three intersection points.
\end{enumerate}
   Then $V_K$ is $C^r$ in a neighborhood of $z$. \qed
\end{theorem}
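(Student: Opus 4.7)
The plan is to apply the implicit function theorem to a system of equations that encodes the three-point inscription condition. First, I would parametrize nearby extremal ellipses by the two scale-invariant shape parameters $(\beta,\psi)$, equivalently by $c\in H_\infty$, and treat the center $(x_0,y_0)$, scale $\rho$, intersection points $(x_i,y_i)$, and angular parameters $\theta_i$ as dependent variables. Using that $\partial K$ is a $C^r$ graph near each $(x_i,y_i)$, I can eliminate one coordinate at each intersection. The tangency of $C_T$ to $\partial K$ at $(x_i,y_i)$ gives a relation of the form (\ref{eqn:3.12a}) that locally expresses $\theta_i$ as a smooth function of the remaining variables, eliminating all three $\theta_i$. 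Finally, the two equations $A_3=B_3=0$ let me solve for $(x_0,y_0)$ in terms of $x_3$ and the parameters, leaving a reduced system $M(x_1,y_2,x_3,\rho;\beta,\psi)=0$ of four scalar equations in four unknowns.

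The crux is then showing that the partial Jacobian $JM$ in the variables $(x_1,y_2,x_3,\rho)$ is nonsingular at the reference ellipse, so that $x_1,y_2,x_3,\rho$ (and hence $\rho(c)$ and $a(c)$ in (\ref{eqn:1.})) depend $C^r$-smoothly on $(\beta,\psi)$. I would first make a coordinate choice in which $\eta_1'(x_{10})=\eta_2'(y_{20})=0$; this, together with the tangency identity $S_2'(\theta_1)=0$ at the first intersection and its analogue at the second, kills four entries of $JM$ and reduces $\det(JM)$ to a product of $\partial A_1/\partial x_1$, $\partial B_2/\partial y_2$, and a $2\times 2$ determinant coming from the $(x_3,\rho)$ block. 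For the third intersection, where the tangent slope is $\tan\alpha=\eta_3'(x_{30})>0$, I would rotate the two rows corresponding to $A_2,B_1$ by $R_\alpha$ so that the $\tilde\eta_3'(0)=0$ identity again produces a zero entry, turning the residual $2\times 2$ block into upper/lower triangular form whose diagonal factors are $\tilde S_1'(\theta_3)\partial\theta_3/\partial\tilde x_3 - 1$ and a geometric quantity $B_{R_\alpha}$.

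The routine but central calculation is that each of the three triangular factors $S_j'(\theta_i)\partial\theta_i/\partial(\cdot)-1$ vanishes precisely when $\kappa_{C_T}(x_i)=\kappa_{\partial K}(x_i)$. I would verify this by implicitly differentiating the tangency relation (\ref{eqn:3.12a}), substituting the normalization $\eta_i'=0$, eliminating $\psi$ via $\tan\psi=\beta/\tan\theta_i$, and comparing with the standard curvature formula for the ellipse parametrized by (\ref{eqn:3.2})--(\ref{eqn:3.3}). Hence hypothesis (3) makes the first three factors of $\det(JM)$ nonzero.

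The main obstacle, and the one step not forced by the curvature hypothesis, is to show that the residual geometric factor $B_{R_\alpha}$ never vanishes for a genuine inscribed extremal ellipse. Unpacking the definitions, $B_{R_\alpha}=0$ would translate into an identity $\tan\alpha = (S_2(\theta_1)-S_2(\theta_3))/(S_1(\theta_2)-S_1(\theta_3))$. The plan is to interpret both sides geometrically: the left side is the positive slope $\eta_3'(x_{30})$, while the right side, in view of the coordinate normalizations at $(x_1,y_1)$ and $(x_2,y_2)$, should equal the negative of the slope of the segment joining certain tangent-line projections toward $(x_3,y_3)$, which is necessarily negative because $C_T$ is inscribed and the tangent lines at the first two intersections separate the third intersection from the ellipse on the correct side. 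This sign-chasing, forced by convexity and the inscribed condition, is the delicate part. Once $\det(JM)\neq 0$ is established, the implicit function theorem gives $\rho(c)$ and $a(c)$ as $C^r$ functions, the foliation is $C^r$ near $C$, and since $V_K$ is obtained from the foliation by the explicit formula $V_K(F(\zeta))=|\log|\zeta||$, the chain rule yields $V_K\in C^r$ near $z$.
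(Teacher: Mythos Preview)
Your proposal is correct and follows essentially the same approach as the paper: the same variable elimination scheme, the same coordinate normalizations $\eta_1'(x_{10})=\eta_2'(y_{20})=0$, the same rotation $R_\alpha$ at the third point to triangularize the residual $2\times 2$ block, the same curvature identification for the three diagonal factors, and the same geometric slope-comparison argument to rule out $B_{R_\alpha}=0$. Your identification of the $B_{R_\alpha}\neq 0$ step as the only piece not forced by hypothesis (3) is exactly right, and your sketch of its resolution matches the paper's.
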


Similar reasoning as in Remark \ref{rmk:3a} shows that the parameters for which condition (1) holds but condition (3) fails form a subset of $H_{\infty}$ of at most one real dimension.

\subsection*{Extremal ellipses meeting $\partial K$ in more than three points}
The same reasoning also shows that the parameters for extremal ellipses meeting $\partial K$ in at least four points form a lower dimensional subset.  Note that $V_K$ may not be smooth across such ellipses.
\begin{example} \label{ex:4pts} \rm 
For the unit square $S=[-1,1]\times[-1,1]$, we have by Lemma \ref{lem:3.1} that $V_S(z)=\log|h(z_1)|=\log|h(z_2)|$ if $z=(z_1,z_2)$ lies on an extremal ellipse that intersects all four sides.  In $\CC^2\setminus S$, $V_S$ is not smooth precisely on the set where $|h(z_1)|=|h(z_2)|$, which is a submanifold of real dimension 3. 
\end{example}

The results of this section may be summarized in the following theorem.

\begin{theorem}\label{thm:smoothness}
Let $K\subset\RR^2\subset\CC^2$ be a convex body whose boundary $\partial K$ is $C^r$-smooth ($r\in\{2,3,...\}\cup\{\infty,\omega\}$).  Then $V_K$ is $C^r$ on $\CC^2\setminus K$ except for a set of real dimension at most 3.
\end{theorem}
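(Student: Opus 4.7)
The plan is to assemble Theorems \ref{thm:3a}, \ref{thm:3b}, and \ref{thm:3.2} (together with Remark \ref{rmk:3a} and Example \ref{ex:4pts}) by a dimension count on the parameter space $H_\infty$ of extremal leaves. Through every point $z\in\CC^2\setminus K$ there passes at least one extremal ellipse $C$, and by the remark following Theorem \ref{thm:3.2} I may, for the purpose of studying smoothness near $z$, assume extremals are unique so that there is a continuous local foliation parametrized by an open subset of $H_\infty$ (which itself has real dimension $2$, and whose two-real-dimensional leaves sweep out $\CC^2\setminus K$).

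I would partition $\CC^2\setminus K$ according to the cardinality of $C_T\cap\partial K$, which is at least $2$ by convexity. In the two-point case, Theorem \ref{thm:3a} gives $C^r$-smoothness of $V_K$ at $z$ when the curvature of $C_T$ strictly exceeds that of $\partial K$ at one of the two intersection points, and by Remark \ref{rmk:3a} the parameters $c\in H_\infty$ for which this curvature condition fails form a subset of real dimension at most $1$; sweeping through the corresponding leaves gives an exceptional set in $\CC^2\setminus K$ of real dimension at most $1+2=3$. In the three-point case, Theorem \ref{thm:3b} applies under the analogous strict-curvature condition, and the same dimension argument (indicated just after that theorem) again produces at most a $3$-dimensional exceptional set. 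For $|C_T\cap\partial K|\geq 4$, each additional tangential intersection imposes one real condition on the parameters, so these extremals correspond to an at most $1$-dimensional subset of $H_\infty$ and hence to a further at most $3$-dimensional exceptional set in $\CC^2\setminus K$ (as illustrated for the square by Example \ref{ex:4pts}). The degenerate possibility, in which $C_T$ shares a line segment with $\partial K$, is subsumed by Theorem \ref{thm:3.2}: such a $C$ through a pair of parallel line segments in $\partial K$ gives pluriharmonicity, hence smoothness, locally, and therefore contributes to the good set rather than to the exceptional set. (Degenerate extremals -- complex lines -- meet $\partial K$ in at most two points and always satisfy the nondegeneracy condition \eqref{eqn:3.5a}, by the remark at the end of Section 3.2.)

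Taking the union of the three exceptional loci above yields a set of real dimension at most $3$, off which $V_K$ is $C^r$ by Theorems \ref{thm:3a}, \ref{thm:3b} and \ref{thm:3.2}. The main obstacle has already been overcome inside those earlier theorems: the verification, via a careful calculation with Jacobians of the implicit-function-theorem systems in Sections 3.2 and 3.3, that nonsingularity is equivalent to the strict-curvature inequality at the relevant boundary points. Once one accepts those regularity theorems, what remains is essentially combinatorial bookkeeping, with the single care that sweeping a $1$-dimensional parameter subset through $2$-dimensional leaves produces a real-$3$-dimensional and not higher subset of $\CC^2\setminus K$ -- which is immediate since the foliation depends continuously (and, on the smooth stratum, even $C^r$) on $c\in H_\infty$.
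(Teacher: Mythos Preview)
Your proposal is correct and follows essentially the same approach as the paper: partition the extremal ellipses by the number of boundary intersections, invoke Theorems \ref{thm:3a} and \ref{thm:3b} for the two- and three-point cases, observe that the $\geq 4$-point case also corresponds to an at most one-dimensional parameter set in $H_\infty$, and then sweep through the two-real-dimensional leaves to conclude the exceptional set has real dimension at most $3$. Your inclusion of Theorem \ref{thm:3.2} and the remark on degenerate ellipses is slightly more than the paper writes out, but harmless (and in fact redundant once $\partial K$ is $C^2$, since then the curvature condition in Theorem \ref{thm:3a} is automatically met at any flat boundary point).
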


\begin{proof}
A point at which $V_K$ is not smooth must lie on an extremal ellipse $C$ that satisfies one of the following conditions:
\begin{itemize}
\item $C$ satisfies Properties (1)  but not (3) in Theorem \ref{thm:3a};
\item $C$ satisfies Properties (1) but not (3) in Theorem \ref{thm:3b}; or
\item $C$ meets $\partial K$ in at least 4 points.
\end{itemize}
A collection of ellipses that satisfies one of the above conditions forms {  at most} a (real) one-parameter family; so the union of these ellipses is {  at most} a real 3-dimensional set.
\end{proof}

That $V_S$ in Example \ref{ex:4pts} is a maximum of smooth functions is an instance of a more general phenomenon.  Given a symmetric convex set $K$, suppose $z\in C$ where $C$ is an extremal ellipse that intersects $K$ in four points, and suppose that the curvature of $C_T$ is strictly greater than that of $\partial K$ at these points.  We enlarge $K$ in  two different ways to obtain convex sets $K_1, K_2$ with the following properties:
\begin{itemize}
\item $K=K_1\cap K_2$.
\item $C$ is an extremal ellipse for each of $K_1,K_2$.
\item On some neighborhood of $C\setminus K$, $V_{K_j}$ is smooth for each $j=1,2$ and \begin{equation}\label{eqn:max1} V_K=\max\{V_{K_1},V_{K_2}\}.\end{equation}
\end{itemize}
Theorem \ref{thm:smoothness} gives the smoothness of the $V_{K_j}$'s in a neighborhood of $C\setminus\RR^2$.  Figure \ref{fig:4a} illustrates this method when $\partial K$ is given by $x^4+y^4=1$.  Since $V_K$ is the maximum of two functions, it is not a priori smooth across $C$ (and we expect non-smoothness in general).  For $\partial K$ given by $x^{2n}+y^{2n}=1$ (where $n>1$), the ellipses that intersect $\partial K$ in four points form a real 3-dimensional set in $\CC^2$ in which we expect $V_K$ to be non-smooth.

 Note that the same sort of argument works in other cases, e.g. $C_T$ intersects $\partial K$ in more than four points and/or $K$ is not symmetric.  One can show that $V_K$ is locally a maximum of smooth functions by considering enlargements of $K$ to convex sets whose boundaries each meet $C_T$ in three points, and then taking their extremal functions.
 
 The same argument can also be used to get local smooth approximations: given $\epsilon>0$ and $z_0\in\CC^2\setminus K$, one can construct a convex set $K_{\epsilon}$ with the property that for all $z$ in some neighborhood of $z_0$, $V_{K_{\epsilon}}$ is smooth and $V_{K_{\epsilon}}(z)\leq V_K(z)\leq V_{K_{\epsilon}}(z)+\epsilon$.

\begin{figure}
\begin{center}
\includegraphics[scale=0.3]{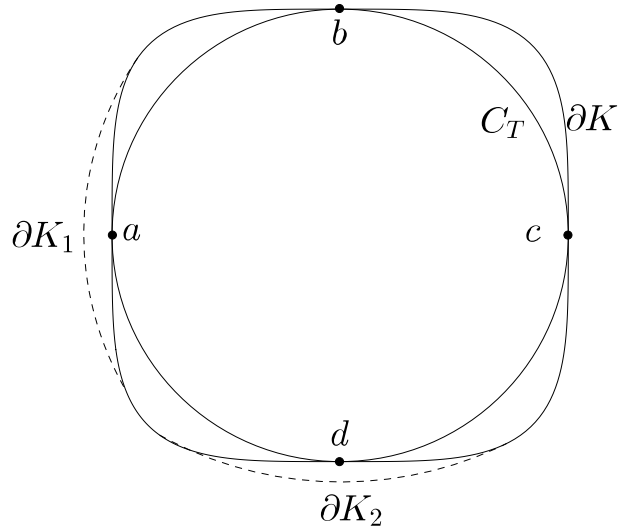}
\end{center}
\caption{The construction of $K_1$ and $K_2$ by locally modifying $\partial K$.    The real ellipse $C_T=C\cap K$ intersects $\partial K_1$ in $\{b,c,d\}$ and $\partial K_2$ in $\{a,b,c\}$.     
Equation (\ref{eqn:max1}) holds near $C$ because an extremal ellipse for $K$ whose parameters are sufficiently close to those of $C$ is also an extremal ellipse for one of the $K_j$'s.   }
\label{fig:4a}
\end{figure}

For any compact convex set $K$, it seems plausible that one can make a finite number of local boundary modifications as in Figure \ref{fig:4a} to remove the `bad' conditions listed in the proof of Theorem \ref{thm:smoothness}, at least q.e.\footnote{Recall that a property holds q.e.\;=\;\emph{quasi-everywhere} if it holds everywhere outside a (possibly nonempty) pluripolar set.}  This motivates the following conjecture.
\begin{conjecture*}
Let $K\subset\RR^2$ be a convex body with smooth boundary.
\begin{enumerate}
\item There is a finite collection $\{K_j\}$ of convex bodies with the property that $V_{K_j}$ is smooth q.e. on $\CC^2\setminus K_j$ for each $j$, and $V_K=\max_j V_{K_j}$. 
\item Given $\epsilon>0$ there is a convex body $K_{\epsilon}$ such that $V_{K_{\epsilon}}$ is smooth q.e. on $\CC^2\setminus K_{\epsilon}$ and $|V_K(z)-V_{K_{\epsilon}}(z)|<\epsilon$ for all $z\in\CC^2$.
\end{enumerate} 
\end{conjecture*}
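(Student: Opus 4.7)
The plan is to globalize the local boundary modification construction of Figure~\ref{fig:4a}. For part (1), I would first identify the \emph{bad parameter set} $B\subseteq H_{\infty}$, consisting of those $c$ whose extremal ellipse $C=C(c)$ for $K$ either meets $\partial K$ in at least four points, or has a $2$- or $3$-point intersection but fails the strict curvature inequality of Theorem~\ref{thm:3a}(3) or Theorem~\ref{thm:3b}(3). By Remark~\ref{rmk:3a} and the analogous count following Theorem~\ref{thm:3b}, $B$ is a compact subset of $H_\infty$ of real dimension at most one. I would then cover $B$ by finitely many open sets $U_1,\dots,U_N\subset H_{\infty}$ small enough that over each $U_j$ the trace $C_T(c)$ varies smoothly in $c$ with a combinatorially constant tangency pattern. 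Over each $U_j$, I push $\partial K$ outward in a tubular neighborhood of $\bigcup_{c\in U_j\cap B}C_T(c)$ exactly as in Figure~\ref{fig:4a}, producing a convex enlargement $K_j\supseteq K$ for which (i) every $C(c)$ with $c\in U_j\cap B$ remains extremal for $K_j$ with the same trace $C_T(c)$, and (ii) near each such $C(c)$, the new boundary $\partial K_j$ meets $C_T(c)$ in at most three points and the strict curvature inequality holds at each intersection point.

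Setting $K_0:=K$ together with $K_1,\dots,K_N$ would then yield the desired collection. Monotonicity $K\subseteq K_j$ forces $V_{K_j}\le V_K$ globally, while if $z\in C(c)\setminus K$ then either $c\notin B$ and $V_{K_0}(z)=V_K(z)$, or $c\in U_j\cap B$ for some $j$, in which case (i) combined with the harmonicity of $V_K$ and $V_{K_j}$ along the common extremal disk gives $V_{K_j}(z)=V_K(z)$. Thus $V_K=\max_{0\le j\le N}V_{K_j}$. To conclude that each $V_{K_j}$ is itself smooth q.e., I would apply Theorem~\ref{thm:smoothness} to $K_j$ and argue that the residual bad parameter set $B_j\subseteq H_\infty$ for $K_j$ is contained in $(B\setminus U_j)$ together with a zero-dimensional contribution from new tangencies created by the modification, so that the union of corresponding bad ellipses is pluripolar. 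The main obstacle in part (1) is precisely to control this contribution --- i.e., to ensure that the outward push introduces no new positive-dimensional family of bad extremals in $K_j$ --- so that $V_{K_j}$ is smooth q.e.\ rather than merely off a real $3$-dimensional set.

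For part (2), the natural approach is to merge the $K_j$'s of part (1) into a single convex enlargement $K_\epsilon$ close to $K$ in Hausdorff distance. Continuity of $K'\mapsto V_{K'}$ under Hausdorff convergence (on the class of non-pluripolar compacta) gives $0\le V_K-V_{K_\epsilon}<\epsilon$ once $K_\epsilon$ is sufficiently close. The hard part is to maintain smoothness q.e.: ideally a single generic perturbation that implements all the local fixes of part (1) simultaneously and then smooths $\partial K_\epsilon$ will leave only a zero-parameter, hence pluripolar, family of bad extremals. The principal obstacle --- and the likely reason the statement is conjectural --- is a transversality/genericity issue in the infinite-dimensional space of smooth convex bodies: because extremal ellipses depend nonlocally on $\partial K$, standard Sard--Thom-type arguments do not apply directly. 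Proving the conjecture will likely require either a careful real-analytic study of the Robin exponential map near osculating-circle degeneracies, or an explicit construction handling each combinatorial tangency type separately.
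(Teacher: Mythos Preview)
The statement you are attempting to prove is explicitly labeled a \emph{Conjecture} in the paper, and the paper does not prove it. The only content surrounding it is the motivating paragraph before the statement (explaining that the Figure~\ref{fig:4a} construction makes the conjecture plausible) and a remark afterward noting that it holds trivially for the real disk. So there is no ``paper's proof'' to compare your proposal against; your outline is in fact a fleshed-out version of the same heuristic the authors give, and you correctly flag the transversality/genericity issue as the reason the statement remains open.

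One concrete gap in your sketch for part (1): you set $K_0:=K$ and require that \emph{each} $V_{K_j}$ be smooth q.e. But for $K_0=K$ the bad parameter set is all of $B$, which is generically one-dimensional in $H_\infty$, and the corresponding bad ellipses sweep out a real $3$-dimensional set in $\CC^2$. Such a set is not pluripolar (pluripolar sets in $\CC^2$ have real dimension at most $2$), so $V_{K_0}$ is not smooth q.e.\ and $K_0$ cannot be a member of the collection. You would instead need the enlargements $K_1,\dots,K_N$ alone to satisfy $V_K=\max_j V_{K_j}$, which forces each $K_j$ to agree with $K$ along enough of $\partial K$ that the extremals for ``good'' parameters $c\notin B$ are also extremal for some $K_j$ --- a much more delicate construction than a local push near the bad ellipses. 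This is another facet of the same global obstruction you identify at the end.
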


\begin{remark}\rm 
The above conjecture is (trivially) true for the real disk, whose extremal function is smooth away from $z_1^2+z_2^2=1$.  It is not known if there is a real convex set whose extremal function is smooth everywhere on its complement.
\end{remark}


\section{The complex equilibrium measure and the Robin exponential map}

In this section, we relate the complex equilibrium (or Monge-Amp\`ere) measure of a convex set $K$ to that of its Robin indicatrix, defined below.

Given a compact set $K\subset\CC^n$, the Robin function $\rho_K$ of $K$ is the logarithmically homogeneous, psh function given by $\limsup_{|\lambda|\to\infty}[V_K^*(\lambda z)-\log|\lambda|]$.
The \emph{Robin indicatrix of $K$} is the set given by
\begin{equation}\label{eqn:4.1}
K_{\rho} = \{z\in\CC^n: \rho_K(z)\leq 0\}.
\end{equation}

Let $D$ be a bounded, strictly lineally convex domain with smooth boundary.  We recall some basic facts concerning  Lempert extremal curves for $V_{\overline D}$; {  i.e., holomorphic curves which foliate $\CC^n\setminus \overline D$ on which $V_{\overline D}$ is harmonic,} and the Robin indicatrix of $\overline D$  (cf.,  \cite{burnslevmau:exterior}, \cite{lempert:holomorphic}, \cite{momm:extremal}). Recall that $\Delta=\{\zeta\in\CC: |\zeta|<1\}$ is the open unit disk.

\begin{proposition}\label{prop:4.1}
 Let $K=\overline D$ where $D$ is a bounded, strongly lineally convex body in $\CC^n$ with smooth boundary.  Then
\begin{enumerate}
\item A Lempert extremal curve may be represented as $f:\CC\setminus\Delta\to\CC^n\setminus K$, with Laurent expansion
\begin{equation}\label{eqn:4.2a}
f(\zeta) = a_1\zeta + \sum_{j\leq 0}a_j\zeta^j, \quad \ a_j\in\CC^n,\ a_1\neq 0
\end{equation} 
and $V_K(f(\zeta))=\log|\zeta|$.
\item A Lempert extremal curve may be extended continuously to a map on $\partial\Delta$ with $f(\partial\Delta)\subset\partial D$.
\item A Lempert extremal curve is orthogonal to the level sets of $V_K$. Precisely, if  $z=f(\zeta)\in\CC^n\setminus K$, then the complex hyperplane $H_z$ given by
$$H_z=\{z+w:\ w\cdot tf'(\zeta)=0 \ \forall t\in\CC\} $$ is tangent to the level set of $V_K$ at $z$.
\item
If $v\in\partial K_{\rho}$ then $v=\lim_{|\zeta|\to\infty} f(\zeta)/\zeta$ for some $f$ that parametrizes an extremal curve for $V_K$.  We obtain the same extremal curve for $w\in\partial K_{\rho}$ if and only if $w=ve^{i\theta}$: in this case, $w=\lim_{|\zeta|\to\infty} g(\zeta)/\zeta$ where $g(\zeta)=f(\zeta e^{-i\theta})$. 

\item
There is a smooth diffeomorphism $F:\CC^n\setminus K_{\rho}\to\CC^n\setminus K$ such 
$\rho_K(z) = V_K(F(z))$, and for any Lempert extremal disk parametrized as in  (\ref{eqn:4.2a}), we have
$F(a_1\zeta)  = f(\zeta)$.
\end{enumerate}
\end{proposition}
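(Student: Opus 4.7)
The plan is to treat this proposition as a compendium of classical facts from Lempert theory for strongly lineally convex domains, and to establish each part by invoking the appropriate Lempert-type result and then doing a small amount of bookkeeping. The main tool throughout is Lempert's theorem: for such $K=\overline{D}$, the complement $\CC^n\setminus K$ is smoothly foliated by analytic discs on which $V_K$ is harmonic, and these discs extend smoothly up to $\partial D$. I would establish the parts in the order (1), (2), (4), (3), (5), because (5) depends on (4) to be well-defined.

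First, for (1), I would take a Lempert extremal disc, parametrized originally as a map from $\Delta$ into $D$ with boundary values in $\partial D$; passing to the complement via $\zeta\mapsto 1/\zeta$ (and reflecting) produces a holomorphic $f\colon\CC\setminus\overline\Delta\to\CC^n\setminus K$ with $V_K\circ f=\log|\zeta|$. Since $V_K(z)=\log|z|+O(1)$ at infinity, the map $f$ cannot have an essential singularity nor a pole of order $\geq 2$ at $\infty$, so its Laurent expansion at $\infty$ has the form $a_1\zeta+\sum_{j\leq 0}a_j\zeta^j$ with $a_1\neq 0$; the latter is needed to match the growth $V_K(f(\zeta))=\log|\zeta|$. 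For (2), the continuous extension to $\partial\Delta$ with image in $\partial D$ is exactly Lempert's boundary regularity for extremal discs of strongly lineally convex domains.

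Next, for (4), note that $\rho_K$ is the logarithmic indicator of $V_K$ at infinity, and applying it to $f(\zeta)/\zeta\to a_1$ gives $\rho_K(a_1)=\lim_{|\zeta|\to\infty}[V_K(f(\zeta))-\log|\zeta|]=0$, so $a_1\in\partial K_\rho$. Conversely, given $v\in\partial K_\rho$, the standard existence result for Lempert extremals shows there is an $f$ with leading coefficient $a_1=v$. The rotation ambiguity $v\mapsto ve^{i\theta}$ corresponds to reparametrizing $f(\zeta)\mapsto f(\zeta e^{-i\theta})$, which traces the same curve. For (3), I would use the variational characterization: $f$ is extremal means it minimizes the Kobayashi-type distance, and the first-order optimality condition forces the complex tangent direction $tf'(\zeta)$ to be complex normal to the sublevel set $\{V_K\leq V_K(f(\zeta))\}$; concretely, if one differentiates $V_K\circ f=\log|\zeta|$ and uses that $V_K$ is harmonic along the disc, the complex $(1,0)$-gradient of $V_K$ at $f(\zeta)$ is proportional to $\overline{f'(\zeta)}/\bar\zeta$ (a fact already contained in Lempert's work), which gives the orthogonality of $H_z$ to the level set.

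Finally, for (5), define $F$ by the prescription $F(a_1\zeta):=f(\zeta)$ whenever $f$ is a Lempert extremal with leading coefficient $a_1\in\partial K_\rho$ and $|\zeta|>1$, and extend by logarithmic homogeneity to $\CC^n\setminus K_\rho$: for $w=ta_1\zeta$ with $t>0$, set $F(w):=f(t\zeta)$ using the extremal with leading coefficient $a_1$ (this is consistent because rescaling $a_1$ rescales $\zeta$ accordingly). Well-definedness up to the rotation ambiguity in (4) follows because $F(e^{i\theta}a_1\zeta)=f(e^{i\theta}\zeta)=f(\zeta e^{i\theta})$ parametrizes the same curve at the same point. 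The identity $\rho_K(z)=V_K(F(z))$ is immediate from $V_K\circ f=\log|\zeta|=\rho_K(a_1\zeta)$ (using $\rho_K(a_1)=0$ and log-homogeneity). Smoothness and the diffeomorphism property come from the smooth dependence of Lempert discs on initial data, which is again a theorem of Lempert.

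The main obstacle is part (3): the orthogonality statement is not a formal consequence of the parametrization but rather the geometric heart of the extremality condition, and making the hyperplane $H_z$ tangent to the level set rigorous requires either the first-variation argument for the Kobayashi extremal problem or direct use of Lempert's formula for the complex gradient of $V_K$ along extremal discs. Parts (1), (2), (4), (5) are essentially bookkeeping once the appropriate Lempert theorems are cited.
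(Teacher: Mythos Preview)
Your proposal is correct in spirit, but note that the paper does not actually prove this proposition: it is stated as a summary of known background facts from Lempert theory, with the parenthetical citation ``(cf., \cite{burnslevmau:exterior}, \cite{lempert:holomorphic}, \cite{momm:extremal})'' preceding the statement, and the text moves on immediately afterward. Your sketch therefore goes beyond what the paper provides, giving an outline of how each item follows from the Lempert theory literature; this is a reasonable and essentially accurate reconstruction of why the cited references imply the listed properties.
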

The smooth diffeomorphism $F$ in (5) is called the \emph{Robin exponential map}.  The set $K_{\rho}$ together with the   complex lines through the origin may be regarded as a linearized model of $K$ and the associated foliation for $V_K$. 

By part (2), we can extend the Robin exponential map to $\partial K_{\rho}$ via $F(a_1e^{i\theta})=f(e^{i\theta})$.  Part (4) ensures that the map is well-defined.

\medskip

  Given a bounded, strictly lineally convex domain $D$ with smooth boundary, let $V=V_K$ be the extremal function of its closure $K=\overline D$.  For $\lambda\in(1,\infty)$, write 
$$
D_{\lambda} = \{z\in\CC^n: V(z)<\log\lambda\}
$$
for the sublevel sets of $V$.

\medskip

The main ingredient to relate the equilibrium measure of $K$ with that of its Robin indicatrix $K_{\rho}$ is the following ``transfer of mass'' formula. 

\begin{lemma}[Transfer to a level set] \label{lem:r1}
Let $D$ be as above, and suppose $\psi$ is a continuous function on $\CC^n \setminus D$ with the property that $\psi(f (\zeta)) = \psi(f(\zeta/|\zeta|))$ for all
Lempert extremal disks $f:\Delta\to\CC^n \setminus D$. Then for all $1<\lambda_1<\lambda_2<\infty$,
\begin{equation}\label{eqn:L4.2}
\int_{\partial D_{\lambda_1}} \psi d^cV\wedge(dd^cV)^{n-1} = \int_{\partial D_{\lambda_2}} \psi d^cV\wedge(dd^cV)^{n-1}.
\end{equation}
\end{lemma}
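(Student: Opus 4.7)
The plan is to pull everything back via the Robin exponential map $F:\CC^n\setminus K_\rho\to\CC^n\setminus K$ of Proposition \ref{prop:4.1}(5) and then exploit the logarithmic homogeneity of $\rho_K$ to make independence of $\lambda$ transparent. Since $V\circ F=\rho_K$, I would first change variables to rewrite
$$
\int_{\partial D_\lambda}\psi\,d^cV\wedge(dd^cV)^{n-1} \ =\ \int_{F^{-1}(\partial D_\lambda)}(\psi\circ F)\,d^c\rho_K\wedge(dd^c\rho_K)^{n-1}.
$$
The logarithmic homogeneity $\rho_K(\mu z)=\rho_K(z)+\log|\mu|$ identifies $F^{-1}(\partial D_\lambda)=\{\rho_K=\log\lambda\}$ with the dilate $\lambda\,\partial K_\rho$.

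Next I would apply the dilation $\phi_\lambda(z)=\lambda z$, which is a diffeomorphism $\partial K_\rho\to\lambda\,\partial K_\rho$. Because $\phi_\lambda^*\rho_K-\rho_K=\log\lambda$ is constant, the forms $d^c\rho_K$ and $dd^c\rho_K$ are $\phi_\lambda^*$-invariant, so the integral transforms into
$$
\int_{\partial K_\rho}\psi\bigl(F(\lambda z)\bigr)\,d^c\rho_K\wedge(dd^c\rho_K)^{n-1}.
$$
The final ingredient is the hypothesis on $\psi$: by Proposition \ref{prop:4.1}(5), any $z\in\CC^n\setminus K_\rho$ can be written as $z=a_1\zeta$ with $F(a_1\zeta)=f(\zeta)$ for some Lempert disk $f$, so $F(\lambda z)=f(\lambda\zeta)$, and the assumption gives
$$
\psi(f(\lambda\zeta))=\psi(f(\lambda\zeta/|\lambda\zeta|))=\psi(f(\zeta/|\zeta|))=\psi(f(\zeta))=\psi(F(z))
$$
for $\lambda>0$, using $|\lambda\zeta|=\lambda|\zeta|$. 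Hence the integrand is independent of $\lambda>1$, and comparing $\lambda=\lambda_1$ with $\lambda=\lambda_2$ yields (\ref{eqn:L4.2}).

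I do not see a serious obstacle here: $V$ is smooth on $\CC^n\setminus D$ by Lempert's theory and $F$ is a smooth diffeomorphism by Proposition \ref{prop:4.1}(5), so the pullback in step one is routine, and Proposition \ref{prop:4.1}(4) guarantees that the identification $z=a_1\zeta$ covers every point of $\CC^n\setminus K_\rho$ (every direction at infinity corresponds to a Lempert disk), so the scaling argument is valid globally rather than only leaf-by-leaf. The only substantive input beyond change-of-variables bookkeeping is the interplay between the homogeneity of $\rho_K$ and the invariance of $\psi$ along radial rescalings of the Lempert parameter, which match perfectly under $F$.
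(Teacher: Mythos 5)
Your first step---pulling back via the Robin exponential map $F$ and replacing $d^cV\wedge(dd^cV)^{n-1}$ by $d^c\rho_K\wedge(dd^c\rho_K)^{n-1}$---is where the proof breaks. The identity $V\circ F=\rho_K$ only gives you $F^*(dV)=d\rho_K$, because pullback commutes with exterior differentiation. It does \emph{not} give $F^*(d^cV)=d^c\rho_K$: the twisted differential $d^c$ involves the complex structure $J$, and $F^*$ commutes with $d^c$ only when $F$ is holomorphic. Proposition \ref{prop:4.1}(5) asserts only that $F$ is a \emph{smooth} diffeomorphism; it is holomorphic along each Lempert leaf ($\zeta\mapsto f_a(\zeta)$ is a holomorphic disk) but only smooth transversally, because the coefficients $a_j(a)$ of the leaf depend merely smoothly on the direction $a\in\partial K_\rho$. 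So the ``routine'' change of variables you invoke is exactly the nontrivial content you would need to establish, and it is not available at this stage of the argument; indeed, an identity of that kind would essentially be Theorem \ref{thm:r1} itself, whose proof in the paper \emph{uses} Lemma \ref{lem:r1} as an ingredient, making your route circular.

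The remainder of your argument (the identification of $F^{-1}(\partial D_\lambda)$ with $\lambda\,\partial K_\rho$ via logarithmic homogeneity, the $\phi_\lambda^*$-invariance of $d^c\rho_K$ and $dd^c\rho_K$ because $\phi_\lambda$ is a \emph{linear, hence holomorphic} dilation, and the radial invariance of $\psi\circ F$ via $F(\lambda a_1\zeta)=f_{a_1}(\lambda\zeta)$) is sound, but it all sits downstream of the broken first step. The paper's proof avoids $F$ entirely: it applies Stokes' theorem on the shell $D_{\lambda_2}\setminus\overline{D}_{\lambda_1}$ to write the flux difference as $\int\psi\,(dd^cV)^n+\int d\psi\wedge d^cV\wedge(dd^cV)^{n-1}$, kills the first term because $(dd^cV)^n=0$ off $K$, and kills the second by a pointwise degree count along the Lempert foliation ($f^*(d\psi\wedge d^cV)=0$ and $f^*(dd^cV)=0$ force the $(n,n)$-form to act only on the $(n-1)$-dimensional complex tangent space of the level sets, hence vanish), followed by a mollification to pass from smooth to continuous $\psi$. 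If you want to keep your point of view, the fix is to replace the transfer via $F$ by the holomorphic dilations $t\mapsto\lambda t$, as the paper does when proving Theorem \ref{thm:r1}, and reserve $F$ only for identifying the limiting integrand.
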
                                          

\begin{proof}
Let us first carry out the proof under the assumption that $\psi$ is smooth. By Stokes' theorem, 
\begin{align*}
\int_{-\partial D_{\lambda_1}\cup\partial D_{\lambda_2}} \psi d^cV\wedge(dd^cV)^{n-1}\  &= \ 
 \int_{D_{\lambda_2}\setminus\bar D_{\lambda_1}} \psi(dd^cV)^n  \\
 &\  \hskip2cm + \  \int_{D_{\lambda_2}\setminus\bar D_{\lambda_1}} d\psi\wedge d^cV\wedge (dd^cV)^{n-1} \\
& := \ I + II. 
\end{align*}
Here $-\partial D_{\lambda_1}$ means that we use the opposite orientation on $\partial D_{\lambda_1}$ (i.e., the boundary orientation induced by the complement of $D$). 

 Proving the lemma is equivalent to showing that $I+II=0$.                           
      Now $I = 0$  since $(dd^cV)^n=0$ on $\CC^n \setminus D$. 
      
      Therefore, we must show that $II = 0$. 
First note that with respect to polar coordinates $z=re^{i\theta}$ in one variable, we have $d(\log|z|)=dr/r$ and $d^c\log|z|=d\theta$.
Let $f : \CC \setminus \Delta \to \CC^n \setminus D$ parametrize a Lempert disk, with $V (f (t)) =
\log |t|$, $t = re^{i\theta}$. Then $f^*d^c V = d\theta$. Also, since $\psi\circ f(re^{i\theta}) = \psi\circ f(e^{i\theta} )$, we have $f^*d\psi =
\gamma d\theta$ for some function $\gamma$; thus $f^*(d\psi\wedge d^c V ) = 0$. This says that $d\psi\wedge d^c V$ annihilates any pair of vectors 
tangent to the curve parametrized by $f$, so it  can act nontrivially only on the components that are normal to the curve.  Hence by Proposition \ref{prop:4.1}(2), $d\psi\wedge d^c V$  can act nontrivially only on pairs of vectors with components in the complex tangent space of the level sets of $V$, which is of complex dimension $n-1$.

On the other hand, $f^*dd^c V = 0$ since $V$ is harmonic along the extremal curve; so $dd^cV$ is a $(1,1)$-form that also  can act nontrivially only on pairs of vectors with components in the complex tangent space of the level sets of $V$.  Since $d\psi\wedge d^c V\wedge (dd^c V )^{n-1}$ is a smooth (alternating) $(n,n)$-form that acts only on vectors spanning a space of complex dimension $n-1$, it must be identically zero. So $II = 0$, which proves the lemma when $\psi$ is smooth.

If $\psi$ is only continuous, we first restrict it to $\partial D$ and approximate it by a sequence of smooth functions $\psi_n$, with $\psi_n\to\psi$ uniformly.  This can be done as follows.  Since $\partial D$ is smooth, locally we have a smooth diffeomorphism $\chi:U\subset\RR^{2n-1}\to\partial D$, and if $\psi$ is supported in $\chi(U)$, take $\psi_n = (\psi\circ\chi)_n\circ\chi^{-1}$ where $(\psi\circ\chi)_n$ are standard mollifications of $\psi\circ\chi$ in $\RR^{2n-1}$ with $(\psi\circ\chi)_n\to\psi\circ\chi$ uniformly on $U$.  Since $\partial D$ is compact, a general $\psi$ continuous on $\partial D$ can be mollified as above using a partition of unity, with  $\psi_n\to\psi$ uniformly.

Next, extend the functions $\psi_{n}$ from $\partial D$ to $\CC^n\setminus D$ via $\psi_n(f(\zeta))=\psi_n(f(\frac{\zeta}{|\zeta|}))$ where $f$ parametrizes a foliation disk.  The extended functions $\psi_n$  converge uniformly to $\psi$ on $\CC^n\setminus D$, and moreover are smooth on $\CC^n\setminus\partial D$ since the Lempert foliation is smooth.  Therefore (\ref{eqn:L4.2}) holds with $\psi$ replaced by $\psi_n$.   Taking the limit as $n\to\infty$, and using the uniform convergence $\psi_n\to\psi$,  yields the result for $\psi$.
\end{proof}

We now use the above lemma and a limiting procedure to transfer the Monge-Amp\`ere
measure of $K$ to the boundary of  $K_{\rho}$, the Robin indicatrix.  In the calculations that follow we will use a standard Monge-Amp\`ere formula (see e.g. 
\cite{bedfordmau:complex}): given a smooth psh function $u$ such that the boundary of the set $\{u>0\}$ is a (real) smooth hypersurface $S$, and $u^+=\max\{u,0\}$, then for any continuous function $\varphi$, 
\begin{equation}\label{eqn:4.2b}
\int\varphi (dd^cu^+)^n \ = \  \int_{\{u>0\}} \varphi(dd^cu)^{n} \ + \     \int_{S} \varphi d^cu\wedge (dd^cu)^{n-1}.
\end{equation}

 Note that by (\ref{eqn:4.2b}), the Monge-Amp\`ere measures of ${\overline D_{\lambda}}$ ($\lambda\in(1,\infty)$)  and $K_{\rho}$ are given (with $\varphi$ an arbitrary continuous function) by the formulas 
\begin{equation}\label{eqn:4.4a} \int\varphi (dd^cV_{{\overline D_{\lambda}}})^n = \int_{\partial D_{\lambda}}\varphi d^cV_K\wedge(dd^cV_K)^{n-1}\end{equation}   and
\begin{equation}\label{eqn:4.5a} \int\varphi (dd^c\rho_K^+)^n=\int_{\partial K_{\rho}}\varphi d^c\rho_K\wedge(dd^c\rho_K)^{n-1}.\end{equation}

We will also use standard convergence properties of Monge-Amp\`ere measures (see e.g. \cite{bedfordtaylor:new}).  Recall that for a sequence $\{u_j\}$ of locally bounded psh functions on a domain $D$ we have the weak-$*$ convergence of measures $(dd^cu_j)^n\to(dd^cu)^n$ for some locally bounded psh function $u$ on $D$ whenever:
\begin{enumerate}
\item $u_j\to u$ monotonically as $j\to\infty$ (i.e., $u_j\nearrow u$  a.e. or $u_j\searrow u$); or
\item $u_j\to u$ locally uniformly as $j\to\infty$.
\end{enumerate}

 \begin{theorem}   \label{thm:r1}     
Let $K=\bar D \subset\CC^n$ be the closure of a {  smoothly bounded}, strongly lineally convex domain $D$;
let $V=V_{K}$ be its Siciak-Zaharjuta extremal function, and let $\rho$ be its Robin function. Then for
any continuous $\phi$ on $\partial K$,
\begin{equation} \label{eqn:r1}
\int \phi (dd^cV)^{n} = \int_{\partial K_{\rho}} (\phi\circ F)d^c\rho\wedge(dd^c\rho)^{n-1},
\end{equation}
where $K_{\rho} = \{\rho \leq 0\}$ is the Robin indicatrix, and $F:\CC^n\setminus K_{\rho}\to\CC^n\setminus K$ is the Robin exponential map.
\end{theorem}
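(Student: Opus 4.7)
The plan is to transfer $\int\phi\,(dd^cV)^n$ via the Lempert foliation to an integral over the level set $\partial D_\lambda$ (for any $\lambda>1$), then via the Robin exponential map $F$ to an integral over the corresponding level set $\{\rho=\log\lambda\}$ in the linearized picture, and finally let $\lambda\to 1^+$ to land on $\partial K_\rho$.

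First, I extend $\phi$ to a continuous function $\tilde\phi$ on $\CC^n\setminus D$ that is constant along each Lempert extremal disk: if $f:\CC\setminus\Delta\to\CC^n\setminus K$ parametrizes such a disk (extended continuously to $\partial\Delta$ by Proposition \ref{prop:4.1}(2)), put $\tilde\phi(f(\zeta)):=\phi(f(\zeta/|\zeta|))$. Continuity of $\tilde\phi$ follows from the continuous dependence of the Lempert foliation on its parameters. Applying formula (\ref{eqn:4.4a}) together with the uniform convergence $V_{\overline{D_\lambda}}=(V-\log\lambda)^+\to V$ as $\lambda\to 1^+$, the Bedford--Taylor weak-$*$ convergence $(dd^c V_{\overline{D_\lambda}})^n\to(dd^cV)^n$, and the fact that $(dd^cV)^n$ is supported on $\partial K$ where $\tilde\phi=\phi$, I get
$$\int\phi\,(dd^cV)^n \;=\; \lim_{\lambda\to 1^+}\int_{\partial D_\lambda}\tilde\phi\,d^cV\wedge(dd^cV)^{n-1}.$$
Since $\tilde\phi$ is constant along extremal disks, Lemma \ref{lem:r1} gives that the integrand is independent of $\lambda\in(1,\infty)$, so the identity in fact holds for every $\lambda>1$ without taking a limit.

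Next, I change variables via the Robin exponential map $F:\CC^n\setminus K_\rho\to\CC^n\setminus K$. By Proposition \ref{prop:4.1}(5), $F$ is a smooth diffeomorphism satisfying $V\circ F=\rho$; hence $F^*d^cV=d^c\rho$ and $F^*(dd^cV)^{n-1}=(dd^c\rho)^{n-1}$, and $F^{-1}(\partial D_\lambda)=\{\rho=\log\lambda\}$. The change of variables formula therefore gives, for every $\lambda>1$,
$$\int\phi\,(dd^cV)^n \;=\; \int_{\{\rho=\log\lambda\}}(\tilde\phi\circ F)\,d^c\rho\wedge(dd^c\rho)^{n-1}.$$

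Finally, I pass to the limit $\lambda\to 1^+$ in the linear picture. The psh functions $(\rho-\log\lambda)^+$ converge uniformly to $\rho^+$, so their Monge--Amp\`ere measures converge weak-$*$ to $(dd^c\rho^+)^n$. By (\ref{eqn:4.2b}) applied to $u=\rho-\log\lambda$ (the analogue of (\ref{eqn:4.5a}) for the rescaled indicatrix $\lambda K_\rho$),
$$\int_{\{\rho=\log\lambda\}}(\tilde\phi\circ F)\,d^c\rho\wedge(dd^c\rho)^{n-1} \;=\; \int(\tilde\phi\circ F)\,(dd^c(\rho-\log\lambda)^+)^n,$$
where $\tilde\phi\circ F$ has been extended continuously to $\overline{\CC^n\setminus K_\rho}$ via the continuous extension of $F$ to $\partial K_\rho$ guaranteed by Proposition \ref{prop:4.1}(2); on $\partial K_\rho$ this extension coincides with $\phi\circ F$. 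Since $(dd^c\rho^+)^n$ is supported on $\partial K_\rho$, the weak-$*$ limit is $\int(\phi\circ F)(dd^c\rho^+)^n=\int_{\partial K_\rho}(\phi\circ F)\,d^c\rho\wedge(dd^c\rho)^{n-1}$, which is the desired right-hand side. The hardest part will be this last limit, because $\partial K_\rho$ need not be smooth, so the classical boundary form $d^c\rho\wedge(dd^c\rho)^{n-1}$ may fail to make pointwise sense there; interpreting the boundary integral as the Bedford--Taylor measure $(dd^c\rho^+)^n$ is essential, and the critical technical input making this work is the continuous boundary extension of $F$ (hence of $\tilde\phi\circ F$) furnished by the boundary extension of individual Lempert disks.
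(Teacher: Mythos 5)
Your setup (extending $\phi$ to be constant along Lempert disks and invoking Lemma \ref{lem:r1} to move the integral to an arbitrary level set $\partial D_\lambda$) matches the paper exactly. The gap is in the next step, the ``change of variables via $F$.'' You assert that $F^*d^cV=d^c\rho$ and $F^*(dd^cV)^{n-1}=(dd^c\rho)^{n-1}$ because $V\circ F=\rho$. But $d^c$ is defined through the complex structure, and pullback commutes with $d^c$ only for \emph{holomorphic} maps; the Robin exponential map is merely a smooth diffeomorphism. It is holomorphic along each leaf (it sends $a_1\zeta\mapsto f(\zeta)$), but its transversal dependence on the leaf parameter involves the lower-order Laurent coefficients $a_j$, $j\le 0$ (and, in the real convex case, explicitly the conjugate $\bar b$), so $F$ is not holomorphic and $F^*(d^cV)\neq d^c(V\circ F)$ in general. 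What you are claiming --- that $F$ carries the measure $d^c\rho\wedge(dd^c\rho)^{n-1}$ on $\{\rho=\log\lambda\}$ to $d^cV\wedge(dd^cV)^{n-1}$ on $\partial D_\lambda$ --- is essentially the content of the theorem itself, not a formal consequence of $V\circ F=\rho$.

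This is precisely the difficulty the paper's proof is built to circumvent: instead of pulling back at a fixed level $\lambda$, it sends $\lambda\to\infty$ and rescales, writing the level-set integral as $\int_{\lambda^{-1}\partial D_\lambda}\phi(t\lambda)\,d^cV_\lambda\wedge(dd^cV_\lambda)^{n-1}$ with $V_\lambda(t)=V(t\lambda)-\log\lambda\to\rho(t)$ uniformly away from $0$. In that limit the foliation linearizes: Lemma \ref{lem:4.4} bounds $|a\xi-f_a(\xi)|$ uniformly, and Lemma \ref{lem:r2} converts that bound into the estimate $|\psi(t)-\phi(t\lambda)|<\epsilon$ for large $\lambda$, where $\psi$ is the radially constant function $\psi(\zeta a)=(\phi\circ F)(a)$. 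Only after this replacement does the weak-$*$ convergence of $(dd^cV_\lambda^+)^n$ to $(dd^c\rho^+)^n$ produce the right-hand side of (\ref{eqn:r1}). Your final step (the $\lambda\to1^+$ limit on the $\rho$-side, interpreting the boundary integral as the Bedford--Taylor measure $(dd^c\rho^+)^n$) is fine as far as it goes, but it rests on the invalid pullback identity, so the proof as written does not go through. To repair it you would either have to prove the pullback identity for the non-holomorphic $F$ directly (which amounts to the paper's argument in disguise) or adopt the rescaling-to-infinity scheme.
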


\noindent\emph{Proof.}\ 
Using the Lempert foliation, we extend $\phi$ continuously to $\CC^n \setminus D$ by the formula 
$\phi(f (\zeta)) :=    \phi(f (\zeta/|\zeta|))$, for all Lempert disks $f : \Delta\to\CC^n\setminus D$.  Next, we have $V_{\overline{D}_{\lambda}}\nearrow V$, so $(dd^cV_{\overline D_{\lambda}})^n\to(dd^cV)^n$ in the weak-$*$ convergence
of measures.  But for this particular $\phi$, 
$
\int \phi (dd^cV_{\overline D_{\lambda}})^n  \ = \  \int_{\partial D_{\lambda}} \phi d^cV\wedge(dd^cV)^{n-1}
$
is constant in $\lambda$ by Lemma \ref{lem:r1}, and hence the equality 
$$\int\phi(dd^cV)^n = \int_{\partial D_{\lambda}}\phi d^cV\wedge(dd^cV)^{n-1}$$ holds for all  $\lambda>1$.

To prove the theorem, it suffices to show that the right-hand side of the above equation
converges to the right-hand side of (\ref{eqn:r1}) as $\lambda\to \infty$. To see this, let $\lambda t = z$; then
\begin{align*}
& \int_{z\in\partial D_{\lambda}}\phi(z)d^c_zV\wedge(dd^c_zV)^{n-1} \\
& \qquad = \ \int_{t\in\lambda^{-1}(\partial D_{\lambda})}\!\! \phi(t\lambda)d^c_t(V(t\lambda)-\log|\lambda|)\wedge(dd^c_t(V(t\lambda)-\log|\lambda|))^{n-1}.
\end{align*}
For clarity, in the above lines the dependence (of derivatives and integrals) with respect to the variable 
$z$ or $t$ has been made explicit.  (Note that $\lambda d_z= d_t$, $\lambda d_z^c= d_t^c$.) 

Away from the origin, the convergence $V_{\lambda} (t) := V (t{\lambda}) - \log |\lambda|\longrightarrow \rho(t)$
is uniform as $\lambda\to \infty$. Hence we get the weak-* convergence $(dd^c (\max{V_{\lambda} , 0}))^n \to (dd^c (\max{\rho, 0}))^n$,  i.e.,
$$
\int_{\lambda^{-1}(\partial D_{\lambda})}\varphi d^cV_{\lambda}\wedge(dd^cV_{\lambda})^{n-1}\to \int_{\partial K_{\rho}}\varphi d^c\rho\wedge(dd^c\rho)^{n-1}
$$
for any continuous function $\varphi$.
                                                                              
     In particular, let $\psi$ be the continuous function given by 
     \begin{equation}\label{psidef} \psi( \zeta a ) = (\phi\circ F )(a) \ \hbox{for all} \ 
a\in\partial K_{\rho} , \ \zeta\in (1, \infty). \end{equation} Then
\begin{equation}\label{eqn:r2}
\int_{\lambda^{-1}\partial D_{\lambda}}\psi d^cV_{\lambda}\wedge(dd^cV_{\lambda})^{n-1}\longrightarrow
\int_{\partial K_{\rho}}\psi d^c\rho\wedge(dd^c\rho)^{n-1} \hbox{ as } \lambda\to \infty.
\end{equation}

We want to replace $\psi(t)$ on the left-hand side by $\phi(t\lambda)$ for $\lambda$ sufficiently large.  To do this, we will prove a couple of lemmas before returning to the main proof.

\begin{lemma}\label{lem:4.4}
There is a constant $C>0$ such that  $|a\xi - f_a(\xi)|<C$ for all $a\in\partial K_{\rho}$, $|\xi|>1$. 
\end{lemma}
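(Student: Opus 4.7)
The plan is to use the Laurent expansion of $f_a$ guaranteed by Proposition \ref{prop:4.1}(1), together with a maximum-principle argument in the variable $1/\xi$, and then reduce the required uniform bound to a consequence of the compactness of $\partial K_{\rho}$ and $\partial D$.

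First, I would fix $a \in \partial K_{\rho}$ and consider the Lempert extremal $f_a:\CC\setminus\Delta\to\CC^n\setminus D$ with $F(a\xi)=f_a(\xi)$; its expansion has the form
\[
f_a(\xi) \;=\; a\xi + \sum_{j\leq 0} a_j(a)\,\xi^{j}, \qquad |\xi|>1,
\]
where the coefficient of $\xi^1$ is precisely $a$ by Proposition \ref{prop:4.1}(4),(5). Set $h_a(\xi):=f_a(\xi)-a\xi$. By Proposition \ref{prop:4.1}(2), $f_a$ extends continuously to $\{|\xi|\geq 1\}$ with $f_a(\partial\Delta)\subset\partial D$, so $h_a$ is continuous on $\{|\xi|\geq 1\}$ and holomorphic on $\{|\xi|>1\}$. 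In particular, substituting $\zeta=1/\xi$ gives a function $g_a(\zeta):=h_a(1/\zeta)$ which is holomorphic on $\Delta\setminus\{0\}$; since the Laurent series of $h_a$ contains only non-positive powers of $\xi$, the function $g_a$ extends holomorphically across $0$, and is continuous up to $\partial\Delta$.

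Next, I would apply the maximum principle to $g_a$ on $\overline{\Delta}$, giving
\[
\sup_{|\xi|\geq 1}|h_a(\xi)| \;=\; \sup_{|\zeta|\leq 1}|g_a(\zeta)| \;=\; \sup_{|\zeta|=1}|g_a(\zeta)| \;=\; \sup_{|\xi|=1}|h_a(\xi)|.
\]
For $|\xi|=1$ we have $f_a(\xi)\in\partial D$ and $|a\xi|=|a|$, hence
\[
|h_a(\xi)| \;\leq\; |f_a(\xi)| + |a| \;\leq\; \max_{w\in\partial D}|w| \,+\, \max_{b\in\partial K_{\rho}}|b|.
\]
Since $D$ is bounded and $K_{\rho}$ is a compact neighbourhood of the origin (being a sublevel set of the logarithmically homogeneous function $\rho_K$), both maxima on the right are finite and independent of $a$. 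Taking $C$ to be their sum yields $|a\xi - f_a(\xi)|=|h_a(\xi)|<C$ uniformly in $a\in\partial K_{\rho}$ and $|\xi|>1$, as required.

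The only place one might worry is the applicability of the maximum principle, but that just needs $h_a$ to be holomorphic at infinity (i.e.\ in $1/\xi$) and continuous up to $|\xi|=1$, both of which are built into Proposition \ref{prop:4.1}(1),(2). Uniformity in $a$ then comes essentially for free from the compactness of $\partial K_{\rho}$ and $\partial D$; no delicate dependence of the Laurent coefficients $a_j(a)$ on $a$ needs to be quantified. I do not anticipate a serious obstacle.
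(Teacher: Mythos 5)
Your proposal is correct and follows essentially the same route as the paper: the paper also passes to $\zeta=1/\xi$, notes that $a/\zeta - f_a(1/\zeta)$ is holomorphic on $\Delta$, and obtains a uniform bound from the maximum principle together with $f_a(\partial\Delta)\subset K$ and the boundedness of $\partial K_\rho$. No issues.
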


\begin{proof}
Let $f_a$ denote the parametrization of the Lempert extremal curve given by $\xi\mapsto a\xi + \sum_{j\leq 0}a_j\xi^j$.  The family of maps $g_a(\zeta)=a/\zeta - f_a(1/\zeta)$ ($a\in\partial K_{\rho}$) are holomorphic on the unit disk $\Delta$ and uniformly bounded there by the maximum principle, since $f_a(\partial\Delta)\subset K$.  In other words, there is a uniform bound $|a/\zeta - f_a(1/\zeta)|<C$ independent of $a$ and $\zeta\in\Delta$; now identify $\xi=1/\zeta$.
\end{proof}


\begin{lemma} \label{lem:r2}
Let $V=V_K$ be the Siciak-Zaharjuta extremal function for a compact set $K$. Suppose
$V$ is continuous, and that $\CC^n \setminus K$ is foliated continuously by Lempert extremal disks.
Suppose $\varphi$ is a continuous function on $\CC^n \setminus K$ such that for any leaf $f : \CC\setminus\Delta\to\CC^n \setminus K$ of the foliation,
$\varphi(f (\lambda)) = \varphi(f (r\lambda))$ for all $r \in (\frac{1}{|\lambda|},\infty )$. 

Then given $C > 0$, $\epsilon > 0$, there exists $R = R(C) > 0$
such that $|\varphi(z) - \varphi(z')| < \epsilon$ whenever $|z|, |z'| > R$ and $|z - z' | < C$.
\end{lemma}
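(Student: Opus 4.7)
The plan is to recognize $\varphi$ as the pullback, via a continuous surjection $\Pi:\CC^n\setminus K\to\partial K_{\rho}$, of a continuous function on the compact set $\partial K_{\rho}$, and then reduce the problem to showing $|\Pi(z)-\Pi(z')|$ can be made arbitrarily small when $|z|,|z'|$ are large and $|z-z'|<C$. For each $z\in\CC^n\setminus K$, let $f_a:\CC\setminus\overline\Delta\to\CC^n\setminus K$ be the unique Lempert extremal disk through $z$, normalized as in Proposition~\ref{prop:4.1}(1),(4) so that $f_a(\lambda)=a\lambda+O(1)$ with $a\in\partial K_{\rho}$; write $z=f_a(\lambda_z)$ and define
$$
\Pi(z):=a\cdot\frac{\lambda_z}{|\lambda_z|}\in\partial K_{\rho}.
$$
Taking $r=1/|\lambda_z|$ in the hypothesis of the lemma gives $\varphi(z)=\varphi(f_a(\lambda_z/|\lambda_z|))$, which by Proposition~\ref{prop:4.1}(2),(5) equals $(\varphi\circ F)(\Pi(z))$, where $F$ denotes the Robin exponential map, continuously extended to $\partial K_{\rho}$. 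Setting $\tilde\varphi:=(\varphi\circ F)|_{\partial K_{\rho}}$, we have $\varphi=\tilde\varphi\circ\Pi$. Continuity of the foliation makes $\Pi$ continuous, and compactness of $\partial K_{\rho}$ makes $\tilde\varphi$ uniformly continuous: given $\epsilon>0$, I fix $\delta>0$ so that $|a_1-a_2|<\delta$ in $\partial K_{\rho}$ implies $|\tilde\varphi(a_1)-\tilde\varphi(a_2)|<\epsilon$.

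The key estimate will come from Lemma~\ref{lem:4.4}, which gives $|z-a\lambda_z|\le C_0$ with $C_0$ depending only on $K$. Since $|a|$ is bounded above and away from $0$ on the compact set $\partial K_{\rho}$ (which is bounded and contains a neighborhood of the origin, by continuity and logarithmic homogeneity of $\rho_K$), this forces $|\lambda_z|\asymp|z|$ up to an additive $O(1)$ error, uniformly over leaves. Dividing the displacement bound by $|\lambda_z|$ yields
$$
\Bigl|\Pi(z)-\frac{z}{|\lambda_z|}\Bigr|=\Bigl|a\frac{\lambda_z}{|\lambda_z|}-\frac{z}{|\lambda_z|}\Bigr|\le\frac{C_0}{|\lambda_z|}=O(1/|z|),
$$
and likewise for $z'$.

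The conclusion will then follow from a triangle inequality:
$$
|\Pi(z)-\Pi(z')|\le O(1/R)+\Bigl|\frac{z}{|\lambda_z|}-\frac{z'}{|\lambda_{z'}|}\Bigr|\le O(1/R)+\frac{|z-z'|}{|\lambda_z|}+|z'|\cdot\frac{\bigl||\lambda_z|-|\lambda_{z'}|\bigr|}{|\lambda_z||\lambda_{z'}|}.
$$
Since $\bigl||\lambda_z|-|\lambda_{z'}|\bigr|\le\bigl||z|-|z'|\bigr|+O(1)\le C+O(1)$ and $|\lambda_z|,|\lambda_{z'}|\gtrsim R$, each term above is $O((C+1)/R)$. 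Choosing $R$ sufficiently large therefore gives $|\Pi(z)-\Pi(z')|<\delta$, and hence $|\varphi(z)-\varphi(z')|=|\tilde\varphi(\Pi(z))-\tilde\varphi(\Pi(z'))|<\epsilon$. The main obstacle, and really the only nontrivial step, is the uniform displacement bound of Lemma~\ref{lem:4.4} governing how far the Lempert disks wander from their asymptotic complex lines; once that is in hand, the rest reduces to bookkeeping with the triangle inequality.
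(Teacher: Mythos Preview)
Your overall strategy matches the paper's: both arguments hinge on Lemma~\ref{lem:4.4} (the uniform bound $|z-a\lambda_z|\le C_0$), reduce to showing that nearby points $z,z'$ far from $K$ lie on leaves with nearby parameters in $\partial K_\rho$, and then finish by (uniform) continuity on the compact set $\partial K_\rho$. Your map $\Pi$ is a clean way to package what the paper does in coordinates.

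However, there is a genuine gap in your bookkeeping. You claim
\[
\bigl||\lambda_z|-|\lambda_{z'}|\bigr|\le\bigl||z|-|z'|\bigr|+O(1),
\]
but this does not follow from Lemma~\ref{lem:4.4}. That lemma gives $|a|\,|\lambda_z|=|z|+O(1)$, i.e.\ $|\lambda_z|=|z|/|a|+O(1)$, and the factor $|a|$ varies over $\partial K_\rho$. If $z$ and $z'$ sit on leaves with $|a|\ne|a'|$, the difference $\bigl||\lambda_z|-|\lambda_{z'}|\bigr|$ can be of order $|z|$, not $O(1)$; your ``$|\lambda_z|\asymp|z|$ up to additive $O(1)$'' only yields a two-sided multiplicative comparison, not the uniform additive one you use in the next line. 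Consequently the term $|z'|\cdot\bigl||\lambda_z|-|\lambda_{z'}|\bigr|/(|\lambda_z||\lambda_{z'}|)$ is not shown to be small.

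The paper supplies exactly the missing ingredient: since $|\lambda_z|=e^{V(z)}$, the estimate $|V(z)-V(z')|\le\omega(|z-z'|/|z|)$ (modulus of continuity of $V$ on a ball, cf.\ \cite{bloomlevmau:robin}) forces $|\lambda_{z'}|/|\lambda_z|\to 1$ as $R\to\infty$, which is what you need. An equivalent fix within your framework: from $|a\lambda_z-a'\lambda_{z'}|\le C+2C_0$ divide by $|\lambda_z|$ to get $|\Pi(z)-\tau\Pi(z')|\le (C+2C_0)/|\lambda_z|$ with $\tau=|\lambda_{z'}|/|\lambda_z|$; applying $\rho_K$ (continuous, logarithmically homogeneous) and using $\rho_K(\Pi(z))=\rho_K(\Pi(z'))=0$ gives $|\log\tau|$ small, hence $|\tau-1|$ small, and then $|\Pi(z)-\Pi(z')|$ small. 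Either route closes the gap; once $|\lambda_z|/|\lambda_{z'}|\to 1$ is established, the remainder of your argument is correct.
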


\begin{proof}

Without loss of generality, $K \subset B(0, 1)$, the unit ball. Then $|V (z) - V (z' )| \leq \omega( |z-z'|/|z| )$ where $\omega$ 
is the modulus of continuity of $V$ on $B(0, 2)$ (see e.g., Lemma 4.5 \cite{bloomlevmau:robin}).


 Hence given $\eta$
such that $\omega(\eta)<\delta$ for some prescibed $\delta>0$, then by choosing $R_0 > C/\eta$ , we have that if 
$|z|, |z'| > R_0$ and $|z- z'| < C$, then
\begin{equation} \label{eqn:r4}
|V(z)-V(z')|<\delta. 
\end{equation}
We also have
\begin{equation}\label{eqn:4.7}
z=f_a(\xi)= a\xi+\sum_0^{\infty}b_k\xi^{-k}, \ \ z'=f_{a'}(\xi') = a'\xi'+\sum_0^{\infty}b_k'\xi'^{-k},
\end{equation}
with $V (z) = \log|\xi|$, $V (z') = \log|\xi'|$ , and $a, a' \in D_{\rho}$. 

Without loss of generality, we  
assume $\xi = |\xi|$ (by reparametrization). If $z, z'$ satisfy (\ref{eqn:r4}), then $e^{-\delta} < \frac{|\xi'|}{|\xi|} < e^{\delta}$. 
Hence $\frac{\xi'}{\xi}=e^{\alpha+i\theta}$ for some $|\alpha| < \delta$, 
$\theta\in [0, 2\pi)$.

 Using the series expansions in (\ref{eqn:4.7}) to estimate $z-z'$, 
we have
$$
C\geq |z-z'| \geq \left|a\xi-a'\xi'\right| - M = |\xi||a-a'e^{\alpha+i\theta}| - M
$$
for some constant $M$ (which may be obtained using the uniform bound in the previous lemma).
Hence $|a-a'e^{\alpha+i\theta}|\leq \frac{1}{|\xi|}(C+M)$.  The right-hand side can be made smaller than $\delta>0$ by taking $|\xi|>\frac{C+M}{\delta}$.

Now take $\epsilon>0$ as given by hypothesis, and choose $\tilde\delta>0$ such that 
\begin{equation}\label{eqn:4.10d}
 \hbox{if $t,t'\in \partial K$ and $|t-t'|<\tilde\delta$, then $|\varphi(t)-\varphi(t')|<\epsilon$.}
\end{equation}  
By continuity of the foliation for $V_K$, we can choose $\delta>0$ such that  
\begin{equation}\label{eqn:4.11d}  
\hbox{$|f_c(1)-f_{c'}(1)|<\tilde\delta$ whenever $|c-c'|<\delta$ and $c,c'\in\partial K_{\rho}$.}  
\end{equation} 
Choose $R>R_0$ sufficiently large that if $|z|>R$, then $V_K(z)>\frac{C+M}{\delta}+1$.  Now given $|z|,|z'|>R$, we have $z=f_a(\xi)$ and $z'=f_{a'}(\xi')$ with $V_K(z)=\log|\xi|$ and $V_K(z')=\log|\xi'|$; and $|z-z'|<C$ implies $|a-a'e^{\alpha+i\theta}|<\delta$.  Finally, 
\begin{align*}
 |\varphi(z) - \varphi(z')| &= |\varphi(f_a(\xi)) - \varphi(f_{a'}(\xi'))| \\
 &= |\varphi(f_a(|\xi|)) - \varphi(f_{a}(e^{-i\theta}|\xi'|))| \\
 &= |\varphi(f_a(1)) - \varphi(f_{a}(e^{-i\theta})| \\
 &=  |\varphi(f_a(1)) - \varphi(f_{e^{i\theta}a}(1)| < \epsilon,
\end{align*}   
where we apply (\ref{eqn:4.10d}) and (\ref{eqn:4.11d}) in the last line.  This concludes the proof.
\end{proof}

\begin{proof}[End of the proof of Theorem \ref{thm:r1}]  By Lemma \ref{lem:4.4} , there is $C>0$ such that  $$|a\xi-f_a(\xi)|<C  \ \hbox{ for all } |\xi|>1,\ a\in\partial K_{\rho}.$$  Given $t$, choose $s\in\CC$ such that $t=sa$ for some $a\in\partial K_{\rho}$.  Then for $\lambda>1$ sufficiently large (chosen so that $|s\lambda a|,|f_a(s\lambda)|>R$ for all $a\in\partial K_{\rho}$, where $R=R(C)$ is chosen as in Lemma \ref{lem:r2}), we have from (\ref{psidef}) 
 $$
|\psi(t)-\phi(t\lambda)|= |\psi(t\lambda)-\phi(t\lambda)| = |\psi(s\lambda a)-\phi(s\lambda a)|
 = |\phi(f_a(s\lambda)) - \phi(s\lambda a)|<\epsilon.
 $$ 
Then $$\left|\int_{\lambda^{-1}\partial D_{\lambda}}\psi(t)d^cV_{\lambda}\wedge(dd^cV_{\lambda})^{n-1}\ - \    
 \int_{\lambda^{-1}\partial D_{\lambda}}\phi(t\lambda)d^cV_{\lambda}\wedge(dd^cV_{\lambda})^{n-1}\right|<(2\pi)^n\epsilon.$$ (The factor of $(2\pi)^n$ is due to the fact that $V_{\lambda}\in L^+(\CC^n)$ implies $\int (dd^cV_{\lambda})^n=(2\pi)^n$; see e.g. \cite{bedfordtaylor:uniqueness}).  
 Since $\epsilon$ is arbitary, we obtain
\begin{equation}\label{eqn:4.7a}
\lim_{\lambda\to \infty}\int_{\lambda^{-1}\partial D_{\lambda}}\psi(t)d^cV_{\lambda}\wedge(dd^cV_{\lambda})^{n-1}\ = \  
\lim_{\lambda\to \infty}\int_{\lambda^{-1}\partial D_{\lambda}}\phi(t\lambda)d^cV_{\lambda}\wedge(dd^cV_{\lambda})^{n-1}.
\end{equation}
The expression inside the limit on the right-hand side is actually constant in $\lambda$: changing back to the variable $z$ where $\lambda t=z$, we have
\begin{eqnarray*}
\int_{\lambda^{-1}\partial D_{\lambda}}\phi(t\lambda)d^cV_{\lambda}\wedge(dd^cV_{\lambda})^{n-1} &=& \int_{\partial D_{\lambda}}\phi(z)d^cV\wedge(dd^cV)^{n-1}\\ 
&=& \int\phi(dd^cV_{\lambda})^{n}  
\end{eqnarray*}
which is independent of $\lambda>1$ by Lemma \ref{lem:r1}.  Then 
$$
\lim_{\lambda\to\infty} \int_{\partial D_{\lambda}} \phi d^cV\wedge (dd^cV)^{n-1} \ = \ 
\lim_{\lambda\to 1^+}\int\phi (dd^cV_{\lambda})^n \ = \ 
\int\phi (dd^cV_K)^n
$$
where the last equality follows by Monge-Amp\`ere convergence.  
    Finally, putting the above together with (\ref{eqn:r2}) and (\ref{eqn:4.7a}) finishes the proof. 
\end{proof}

We use Theorem \ref{thm:r1} to get a similar result for convex bodies $K\subset\RR^n$ with unique extremals.  The Robin exponential map in this  case is of the form $$F(b\zeta)=a+b\zeta+\frac{\bar b}{\zeta}, \quad b\in\partial K_{\rho},\   \zeta\in\CC\setminus\Delta.$$  (Recall that $a$ depends on $b$; more precisely, on $c=[0:b_1:\cdots:b_n]\in H_{\infty}$.)

The Robin exponential map extends continuously to $\partial K_{\rho}$ via $F(be^{i\theta}) = a+be^{i\theta}+\bar b e^{-i\theta}$.  {  Given $x\in K$ and $v\in\RR^n$, there is always an extremal ellipse through $x$ with tangent in the direction of $v$ (see \cite{burnslevmaurevesz:monge}, section 3). Hence $F:\partial K_{\rho}\to K$ is onto but not injective.}

\begin{theorem}\label{thm:r2}
Let $K\subset\RR^n$ be a convex body with unique extremals.  Then for any $\phi$ continuous on $K$, 
$$
\int \phi(dd^cV_K)^n\ =\ \int(\phi\circ F)(dd^c\rho_K^+)^n.
$$ 
\end{theorem}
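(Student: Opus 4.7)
The strategy is approximation: take a decreasing sequence $K_j \searrow K$ of smoothly bounded, strongly lineally convex, conjugation-invariant bodies in $\CC^n$, exactly the sequence used in the proof of Theorem~\ref{thm:2.2}. For each $j$, Theorem~\ref{thm:r1} applies and gives
$$\int \phi\,(dd^c V_{K_j})^n \ =\ \int (\phi \circ F_j)(dd^c \rho_{K_j}^+)^n,$$
where $\phi$ has first been extended continuously from $K$ to all of $\CC^n$ by Tietze, and $F_j$ denotes the Robin exponential map of $K_j$ (extended continuously to $\partial (K_j)_\rho$). The plan is to pass to the limit $j\to\infty$ on both sides and recover the desired identity.

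For the left-hand side, $K\subset K_j$ forces $V_{K_j}\le V_K$, and continuity of $V_K$ (from the foliation) together with $K_j\searrow K$ yields $V_{K_j}\nearrow V_K$ locally uniformly. Bedford--Taylor's convergence theorem then gives the weak-$*$ convergence $(dd^c V_{K_j})^n \to (dd^c V_K)^n$; since $\phi$ is continuous and every measure has total mass $(2\pi)^n$, we conclude $\int\phi\,(dd^c V_{K_j})^n\to\int\phi\,(dd^c V_K)^n$. Similarly $\rho_{K_j}\nearrow\rho_K$ and hence $\rho_{K_j}^+\nearrow\rho_K^+$ locally uniformly, so $(dd^c \rho_{K_j}^+)^n \to (dd^c \rho_K^+)^n$ weak-$*$; note that all these measures are supported in the fixed compact set $(K_1)_\rho$.

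The substantive step is the convergence of the varying integrands $\phi\circ F_j$. I would exploit the foliation structure: $\phi\circ F_j$ is constant along each half-leaf of the Lempert foliation of $\CC^n\setminus K_j$, so it extends naturally to a continuous function $\Phi_j$ on $\CC^n\setminus \mathrm{int}(K_j)$, and then by Tietze to all of $\CC^n$. The analogous construction using the foliation by extremal ellipses of $K$ produces a continuous $\Phi$ extending $\phi\circ F$. The continuous convergence of the Lempert foliations of the $K_j$ to the extremal-ellipse foliation of $K$, established in \cite{burnslevmau:exterior} during the proof of Theorem~\ref{thm:2.2}, then yields $\Phi_j\to\Phi$ locally uniformly. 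Combining uniform convergence of continuous integrands with weak-$*$ convergence of measures supported in a common compact set,
$$\int (\phi\circ F_j)\,(dd^c \rho_{K_j}^+)^n \ =\ \int \Phi_j\,(dd^c \rho_{K_j}^+)^n \ \longrightarrow\ \int \Phi\,(dd^c \rho_K^+)^n \ =\ \int(\phi\circ F)(dd^c\rho_K^+)^n,$$
and matching the two limits finishes the proof.

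The main obstacle is the last convergence step: $F_j$ and $F$ live on different moving boundaries, $F$ is merely continuous and is not injective on $\partial K_\rho$, so one cannot directly talk about uniform convergence of $\phi\circ F_j$ to $\phi\circ F$. Re-encoding these functions as leafwise-constant extensions $\Phi_j,\Phi$ on all of $\CC^n$, where they can be compared pointwise, is the device that sidesteps this difficulty, and the needed continuous convergence of the approximating foliations is already contained in the construction used for Theorem~\ref{thm:2.2}.
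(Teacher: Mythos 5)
Your overall architecture matches the paper's: approximate $K$ by a decreasing sequence $K_j$ of smoothly bounded strongly lineally convex bodies, apply Theorem~\ref{thm:r1} to each $K_j$, and pass to the limit. The left-hand side analysis (monotone/locally uniform convergence of $V_{K_j}$ and the resulting Bedford--Taylor weak-$*$ convergence) is also sound. However, the final step contains a genuine gap.

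The gap is in the claim that $\Phi_j\to\Phi$ locally uniformly near $\partial K_\rho$. The only uniform-convergence input actually available (the paper's Proposition on $F_j\to F$, citing Corollary 7.2 of \cite{burnslevmau:exterior}) gives uniform convergence \emph{on compact subsets of the open set} $\CC^n\setminus K_\rho$. But the measure $(dd^c\rho_{K_j}^+)^n$ is carried by $\partial(K_j)_\rho$, and these level surfaces collapse onto $\partial K_\rho$ as $j\to\infty$; they are not eventually contained in any fixed compact subset of $\CC^n\setminus K_\rho$. So the supports of the integrands escape the region where you control the convergence, and the passage from uniform convergence of $\Phi_j$ plus weak-$*$ convergence of measures to convergence of the integrals does not go through as stated. (There is also a conceptual muddle in defining $\Phi_j$: $\phi\circ F_j$ lives on the Robin side $\CC^n\setminus (K_j)_\rho$, while the Lempert half-leaves live on the other side $\CC^n\setminus K_j$; the leafwise-constant extension you intend must be the radial one $\Phi_j(\lambda v)=\phi(F_j(v))$ for $v\in\partial(K_j)_\rho$, $\lambda\ge1$, not an extension along Lempert leaves.)

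This is exactly the difficulty the paper's proof is engineered to avoid. It introduces a second approximation parameter $r>1$ and works with $V_{j,r}=\max\{0,V_j-\log r\}$, $\rho_{j,r}=\max\{0,\rho_j-\log r\}$: the support of $(dd^c\rho_{j,r})^n$ is $\{\rho_j=\log r\}$, which for $r>1$ stays inside a fixed compact subset of $\CC^n\setminus K_\rho$ where the uniform convergence $F_j\to F$ is valid. After passing to the limit $j\to\infty$ at fixed $r$, the paper then lets $r\to1^+$ using monotone convergence of Monge--Amp\`ere measures. That two-parameter scheme (with the accompanying observation $F_{j,r}=F_j$ on its domain, and the transfer-of-mass identity to move between level sets) is the essential ingredient your proposal omits. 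Without it, or an independent proof of uniform convergence of the extended Robin exponential maps up to and including $\partial K_\rho$, the argument is incomplete.
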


We will prove this using Theorem \ref{thm:r1} and an approximation argument.  Let $\{K_j\}$ be a strictly decreasing sequence of compact sets with the following properties:
\begin{enumerate}
\item For each $j$, $K_j$ is the closure of a smoothly bounded, strongly lineally convex domain $D_j$.
\item $K_{j+1}\subset K_j$ with $\bigcap_j K_j = K$.  
\end{enumerate}
For convenience, let us denote the Siciak-Zaharjuta extremal functions by $V_j=V_{K_j}$ and $V=V_K$, and also write  $\rho = \rho_K^+=\max\{\rho_K,0\}$ and  $\rho_j=\rho_{K_j}^+$.  Write $(K_j)_{\rho}=\{\rho_{K_j}\leq 0\}$ and $K_{\rho}=\{\rho_K\leq 0\}$ for the Robin indicatrices, and write $F_j:\CC^n\setminus(K_j)_{\rho}\to\CC^n\setminus K_j$ and $F:\CC^n\setminus K_{\rho}\to\CC^n\setminus K$ for the Robin exponential maps.  

A key ingredient in the approximation will be the following result, stated without proof (cf., \cite{burnslevmau:exterior}, Corollary 7.2).

\begin{proposition}
On any compact subset of $\CC^n\setminus K_{\rho}$ we have the uniform convergence $F_j\to F$. \qed
\end{proposition}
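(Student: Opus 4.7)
My plan is to combine a normal family argument with the uniqueness-of-extremals hypothesis for $K$: every subsequence of $\{F_j\}$ will have a further subsequence converging locally uniformly on a fixed compact $L\subset\CC^n\setminus K_\rho$, and the uniqueness assumption will force each such limit to agree with $F$. Since $K_j\searrow K$, the extremal functions $V_{K_j}\nearrow V_K$ locally uniformly on $\CC^n\setminus K$ (by Dini, using continuity of $V_K$), and hence $\rho_{K_j}\nearrow\rho_K$ locally uniformly on $\CC^n\setminus\{0\}$. Consequently $(K_j)_\rho\searrow K_\rho$, so a fixed compact $L\subset\CC^n\setminus K_\rho$ lies in $\CC^n\setminus(K_j)_\rho$ for all large $j$, and $F_j|_L$ is defined. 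The maps $F_j$ are uniformly bounded on $L$: by Proposition \ref{prop:4.1}(5), $V_{K_j}(F_j(z))=\rho_{K_j}(z)$ is uniformly bounded on $L$, and the uniform lower growth $V_{K_j}(w)\geq\log^+|w|-C$ (with $C$ independent of $j$, since $K_j\subset K_1$) then yields $|F_j(z)|\leq M$ for all $z\in L$ and all large $j$.

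Suppose, toward a contradiction, that $F_j\not\to F$ uniformly on $L$: there exist $\epsilon>0$, indices $j_k\to\infty$, and points $z_k\in L$ with $|F_{j_k}(z_k)-F(z_k)|\geq\epsilon$, and by compactness of $L$ we may assume $z_k\to z_0\in L$. Write $z_k=b_k\xi_k$ with $b_k\in\partial(K_{j_k})_\rho$ and $\log|\xi_k|=\rho_{K_{j_k}}(z_k)$, so that $F_{j_k}(z_k)=f_k(\xi_k)$, where $f_k(\zeta)=b_k\zeta+\sum_{m\leq 0}a_{k,m}\zeta^m$ is the Lempert disk for $K_{j_k}$ with leading coefficient $b_k$. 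Since $K\subset K_{j_k}\subset K_1$ and $K$ is a convex body in $\RR^n$, all $(K_{j_k})_\rho$ lie in the common compact set $(K_1)_\rho$ and contain a common ball around the origin (because $K_\rho\subset(K_{j_k})_\rho$ and $K_\rho$ contains a neighborhood of $0$); hence $\{b_k\}$ is bounded and bounded away from $0$. Extracting a further subsequence, $b_k\to b_\infty\in\partial K_\rho\setminus\{0\}$ and $\xi_k\to\xi_\infty$ with $\log|\xi_\infty|=\rho_K(z_0)>0$. On each annulus $\{1+\delta<|\zeta|<R\}$ the $f_k$ are uniformly bounded (same growth argument as above), so by Montel and diagonalization I extract a further subsequence converging locally uniformly to a holomorphic limit $f_\infty:\CC\setminus\overline\Delta\to\CC^n$ with leading coefficient $b_\infty$. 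The locally uniform convergence $V_{K_j}\to V_K$ on $\CC^n\setminus K$ forces $V_K(f_\infty(\zeta))=\log|\zeta|$, so $f_\infty$ parametrizes an extremal curve for $V_K$. By the uniqueness-of-extremals hypothesis on $K$, we must have $f_\infty(\zeta)=F(b_\infty\zeta)$ for all $\zeta$ in the domain. Therefore $F_{j_k}(z_k)=f_k(\xi_k)\to f_\infty(\xi_\infty)=F(b_\infty\xi_\infty)=F(z_0)$, while $F(z_k)\to F(z_0)$ by continuity of $F$, contradicting $|F_{j_k}(z_k)-F(z_k)|\geq\epsilon$.

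The main obstacle is the simultaneous control of the three families $\{b_k\}$, $\{\xi_k\}$, and $\{f_k\}$ so that a common subsequence passes to a nondegenerate limit compatible with the extremal structure of $K$. In particular one must rule out $b_\infty=0$ (which would prevent $b_\infty\in\partial K_\rho$) and $|\xi_\infty|=1$ (which would place the target on $\partial K_\rho$, outside $\CC^n\setminus K_\rho$); both are precluded because $L$ is compactly contained in $\CC^n\setminus K_\rho$ and $K$ is a genuine convex body, so $K_\rho$ contains an open neighborhood of the origin. The uniqueness-of-extremals hypothesis on $K$ is the decisive ingredient that pins down $f_\infty$ as $F(b_\infty\,\cdot\,)$; without it, distinct subsequential limits could land on different extremals through $z_0$ with the same leading direction, and the contradiction argument would break down.
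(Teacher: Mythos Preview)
The paper does not actually prove this proposition: it is explicitly ``stated without proof'' with a citation to Corollary 7.2 of \cite{burnslevmau:exterior}. So there is no in-paper argument to compare against. That said, your approach---a normal-families argument on the Lempert disks $f_k$, passage to a subsequential limit $f_\infty$ with leading coefficient $b_\infty\in\partial K_\rho$, and identification of $f_\infty$ with $F(b_\infty\,\cdot\,)$ via the uniqueness-of-extremals hypothesis---is exactly the strategy outlined in Section~2 of the present paper for how the foliation of $\CC^n\setminus K$ was originally constructed in \cite{burnslevmau:exterior}, and it is correct in spirit and in essentially all details.

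One point deserves a word of care. You conclude from $V_K(f_\infty(\zeta))=\log|\zeta|$ that $f_\infty$ ``parametrizes an extremal curve for $V_K$,'' and then invoke uniqueness. Strictly speaking, the uniqueness assumption in the paper is phrased for the \emph{inscribed ellipses} (equivalently, for leaves of the foliation indexed by $H_\infty$), not a priori for arbitrary holomorphic disks on which $V_K$ is harmonic. To close this, you should note either that (i) limits of Lempert disks for the $K_j$ are automatically of the ellipse form $a+b_\infty\zeta+\overline{b_\infty}/\zeta$ (this is proved in \cite{burnslevmau:exterior} and is in fact how the extremals for $K$ are produced), or (ii) any disk with $V_K\circ f=\log|\cdot|$ and prescribed leading term is extremal in the Lempert sense and hence, by the foliation property (Theorem~\ref{thm:2.2}), must coincide with the unique leaf through $[0:b_\infty]$. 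Either route is short, but it should be made explicit; once it is, your contradiction argument goes through cleanly.
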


Given $r>1$, we put
\begin{eqnarray*}
V_{j,r}&=&\max\{0,V_j-\log r\}, \quad \rho_{j,r} \ = \ \max\{0,\rho_j-\log r\}, \\
V_r &=& \max\{0,V-\log r\}, \quad \ \; \, \rho_r\ = \ \max\{0,\rho-\log r\}.
\end{eqnarray*}
It is easy to see that $V_{j,r}$ is the extremal function for the set $K_{j,r}=\{V_j\leq \log r\}$ and $\rho_{j,r}$ is the Robin function for $K_{j,r}$, and that $V_{j,r}\nearrow V_r$, $\rho_{j,r}\nearrow\rho_r$ as $j\to\infty$.

\begin{remark}\label{rem:} \rm By Lempert theory, the set $K_{j,r}$ is smoothly bounded and strongly lineally convex.  Also, if $F_{j,r}$ denotes the corresponding Robin exponential map, then   
$
F_{j,r}=F_j \hbox{ on the domain of } F_{j,r}.
$
This follows from the fact that the images of Lempert extremal disks for $V_{j,r}$ are contained in those of  $V_{j}$. Precisely, if $f(\zeta)$ parametrizes a Lempert extremal for $V_j$, then $f_r(\zeta):=  f(r\zeta)$ parametrizes a Lempert extremal disk for $V_{j,r}$. \end{remark}

We will also make use of the following lemma whose proof is straightforward.

\begin{lemma}\label{lem:4.9}
Suppose we have the uniform convergence $\varphi_j\to\varphi$ of continuous functions on a domain $D$ and the weak-$*$ convergence $\mu_j\to\mu$ of measures on $D$, and the total masses $\mu_j(D),\mu(D)$ are uniformly bounded above.  

Then $\int\varphi_j\,d\mu_j\to\int\varphi\, d\mu$. \qed
\end{lemma}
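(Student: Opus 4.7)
The plan is to use the standard triangle-inequality decomposition that separates the two sources of variation: the change in the integrand and the change in the measure. Specifically, I would write
\[
\int \varphi_j\,d\mu_j - \int \varphi\,d\mu \ = \ \int (\varphi_j-\varphi)\,d\mu_j \ + \ \Bigl(\int \varphi\,d\mu_j - \int \varphi\,d\mu\Bigr)
\]
and show that each of the two pieces on the right tends to zero independently.

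For the first piece, I would apply the uniform bound $\|\varphi_j-\varphi\|_{\infty,D}\to 0$ together with the hypothesis that the total masses are uniformly bounded, i.e., $\mu_j(D)\leq M$ for some constant $M$. This gives
\[
\left|\int(\varphi_j-\varphi)\,d\mu_j\right| \ \leq \ \|\varphi_j-\varphi\|_{\infty,D}\cdot \mu_j(D) \ \leq \ M\,\|\varphi_j-\varphi\|_{\infty,D} \ \longrightarrow\ 0.
\]
For the second piece, $\varphi$ is a fixed continuous function (being the uniform limit of continuous functions on $D$), so $\int\varphi\,d\mu_j\to\int\varphi\,d\mu$ is essentially the definition of the weak-$*$ convergence $\mu_j\to\mu$ against $\varphi$.

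The only subtlety is that weak-$*$ convergence of measures is typically formulated against continuous test functions of compact support (or bounded continuous functions when the masses are tight). In the applications of this lemma within the paper, the measures involved are Monge--Amp\`ere measures of compactly supported extremal-type functions, so $\mu_j$ and $\mu$ have uniformly compactly supported mass, and $\varphi$ may be assumed bounded on a common compact set containing all supports; with this understood the second piece is immediate. I would mention this briefly to justify that ``weak-$*$'' in the statement is the standard notion applied against the continuous test function $\varphi$, and no additional hypothesis on $\varphi$ (such as compact support) is needed. There is no real obstacle here—this is routine—so the proposal is essentially just to record the two-line triangle-inequality argument cleanly.
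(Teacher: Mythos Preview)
Your proposal is correct and is exactly the routine triangle-inequality argument the paper has in mind; the paper itself does not write out a proof at all, stating only that it is ``straightforward'' and appending a \qed. Your observation about supports and test functions is a sensible clarification, consistent with how the lemma is actually applied later to Monge--Amp\`ere measures with common compact support.
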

We will apply this to Monge-Amp\`ere measures of functions in $L^+(\CC^n)$, which have total mass $(2\pi)^n$.  

\medskip

We can now prove Theorem \ref{thm:r2}.

\begin{proof}[Proof of Theorem \ref{thm:r2}]
Let $\phi$ be a real-valued continuous function on $K$. Form the continuous function 
$$
\tilde\phi(z) = \left\{ \begin{array}{rl} \phi(z) & \hbox{ if } z\in K\\ \phi(f(\frac{\zeta}{|\zeta|})) 
& \hbox{ if } z=f(\zeta)\in\CC^n\setminus K \end{array}\right.
$$
where $f$ parametrizes an extremal disk that goes through $z$.  Note that $\tilde\phi$ is continuous since the foliation of extremals for $V_K$ is continuous and each leaf extends holomorphically across $K$ (as a complex ellipse).

Fix $r>1$.  By Theorem \ref{thm:r1}, we have
\begin{equation}\label{eqn:}
\int\tilde\phi(dd^cV_{j,r})^n\ = \ \int(\tilde\phi\circ F_{j,r})(dd^c\rho_{j,r})^n
\ = \ \int(\tilde\phi\circ F_j)(dd^c\rho_{j,r})^{n},
\end{equation}
where we use the observation in Remark \ref{rem:} to get the second equality. 

 The Monge-Amp\`ere formula (\ref{eqn:4.5a}) applied to $\rho_{j,r}$, $\rho_r$ shows that $(dd^c\rho_{j,r})^n$, $(dd^c\rho_r)^n$ are supported on the sets $\{\rho_j=\log r\}$, $\{\rho=\log r\}$, which are contained in $\CC^n\setminus K_{\rho}$. Here we have $F_j\to F$ uniformly; hence $\tilde\phi\circ F_j\to\tilde\phi\circ F$ uniformly on a compact set $S\subset\CC^n\setminus K_{\rho}$ that contains a neighborhood of $\{\rho=\log r\}$ and hence contains  $\{\rho_j=\log r\}$ for all sufficiently large $j$.     
  Applying Lemma \ref{lem:4.9}, we have
 $$
 \int(\tilde\phi\circ F_j)(dd^c\rho_{j,r})^{n} \longrightarrow \int(\tilde\phi\circ F)(dd^c\rho_r)^n \quad\hbox{as } j\to\infty.
 $$
The standard Monge-Amp\`ere convergence also gives  $$\int\tilde\phi(dd^cV_{j,r})^n \longrightarrow\int\tilde\phi(dd^cV_r)^n\quad\hbox{as } j\to\infty.$$   Since (\ref{eqn:}) is true for all $j$,  
$\int\tilde\phi(dd^cV_r)^n =  \int(\tilde\phi\circ F)(dd^c\rho_r)^n$ follows by taking the limit as $j\to\infty$.
This latter formula is true for all $r>1$.  

Since $V_r\nearrow V$ and $\rho_r\nearrow \rho$ as $r\to 1^-$, we also have the weak-$*$ convergences $(dd^cV_r)^n\to(dd^cV)^n$ and $(dd^c\rho_r)^n\to(dd^c\rho)^n$.  Taking the limit as $r\to 1^-$ yields $$\int\tilde\phi(dd^cV)^n =  \int(\tilde\phi\circ F)(dd^c\rho)^n.$$  Finally, note that on $K$, where $(dd^cV)^n$ is supported, we have $\tilde\phi=\phi$.  Similarly, it is easy to check that $\tilde\phi\circ F=\phi\circ F$ on the support of $(dd^c\rho)^n$.   The theorem is proved.
\end{proof}

\begin{remark} \rm
 Note that the formula in Theorem \ref{thm:r2} exhibits $(dd^cV_K)^n$ 
as the push-forward of the measure $(dd^c\rho^+_K)^n$ under the Robin exponential map:  $$F_*((dd^c\rho_K^+)^n)=(dd^cV_K)^n.$$
\end{remark}

\bibliographystyle{abbrv}
\bibliography{myreferences}

\end{document}